\newcommand{\bC}{{\mathbb C}}
\newcommand{\bK}{{\mathbb K}}
\newcommand{\bF}{{\mathbb F}}
\newcommand{\bQ}{{\mathbb Q}}
\newcommand{\bZ}{{\mathbb Z}}
\newcommand{\bN}{{\mathbb N}}
\newcommand{\fB}{{\mathfrak B}}
\newcommand{\fc}{{\mathfrak c}}
\newcommand{\fg}{{\mathfrak g}}
\newcommand{\fl}{{\mathfrak l}}
\newcommand{\fh}{{\mathfrak h}}
\newcommand{\fm}{{\mathfrak m}}
\newcommand{\fn}{{\mathfrak n}}
\newcommand{\fo}{{\mathfrak o}}
\newcommand{\fp}{{\mathfrak p}}
\newcommand{\fr}{{\mathfrak r}}
\newcommand{\fs}{{\mathfrak s}}
\newcommand{\cN}{{\mathcal N}}
\newcommand{\cL}{{\mathcal L}}
\newcommand{\sG}{{\mathscr G}}
\newcommand{\fb}{{\mathfrak b}}
\newcommand{\codim}{{{\mbox{\rm codim}}}}
\newcommand{\Ext}{{{\mbox{\rm Ext}}}}
\newcommand{\Irr}{{{\mbox{\rm Irr}}}}
\newcommand{\Id}{{{\mbox{\rm Id}}}}
\newcommand{\cen}{{{\mbox{\rm cen}}}}
\newcommand{\twopartdef}[4]{\left\{
	\begin{array}{ll}
		#1 & \mbox{if } #2 \\
		#3 & \mbox{if } #4
	\end{array}
	\right.}
\newtheorem{theorem}{Theorem}[section]
\newtheorem{prop}[theorem]{Proposition}
\newtheorem{cor}[theorem]{Corollary}
\newtheorem*{conj}{Conjecture}
\theoremstyle{remark}
\newtheorem{rmk}{Remark}
\begin{document}
\title[Humphreys' Conjecture]{A Note on Humphreys' Conjecture on Blocks}
\author{Matthew Westaway}
\email{M.P.Westaway@bham.ac.uk}
\address{School of Mathematics, University of Birmingham, Birmingham, B15 2TT, UK}
\date{\today}
\subjclass[2020]{Primary: 17B35, 17B50, Secondary: 16D70, 17B10}
\keywords{Reduced enveloping algebra, Block decomposition, Lie algebra, Humphreys' conjecture}
  
\begin{abstract}
	Humphreys' conjecture on blocks parametrises the blocks of reduced enveloping algebras $U_\chi(\fg)$, where $\fg$ is the Lie algebra of a reductive algebraic group over an algebraically closed field of characteristic $p>0$ and $\chi\in\fg^{*}$. It is well-known to hold under Jantzen's standard assumptions. We note here that it holds under slightly weaker assumptions, by utilising the full generality of certain results in the literature. We also provide a new approach to prove the result for $\fg$ of type $G_2$ in characteristic 3, a case in which the previously mentioned weaker assumptions do not hold. This approach requires some dimensional calculations for certain centralisers, which we conduct in the Appendix for all the exceptional Lie algebras in bad characteristic.
%
%
\end{abstract}

\maketitle

\section{Introduction}\label{sec1}

One of the most powerful tools in representation theory is the notion of the {\bf block decomposition}. Given a finite-dimensional $\bK$-algebra $A$, the block decomposition of $A$ gives a partition of the set of irreducible $A$-modules. We may then study the representation theory of $A$ block-by-block. In particular, if we well-understand one block (say, a block containing a trivial module) then we can often use {\bf translation functors} to gain insight into the structure of other blocks.

If $G$ is an algebraic group over an algebraically closed field $\bK$ of characteristic $p>0$ and $\fg$ is its Lie algebra, then we may form, for each $\chi\in\fg^{*}$, the {\bf reduced enveloping algebra} $U_\chi(\fg)$. This is a finite-dimensional $\bK$-algebra which is important to the representation theory of $\fg$, and so we would like to understand its blocks. The leading result in this direction is {\bf Humphreys' conjecture on blocks}.

\begin{conj}[Humphreys' conjecture on blocks]
	Suppose $G$ is reductive and let $\chi\in\fg^{*}$ be nilpotent. Then there exists a natural bijection between the blocks of $U_\chi(\fg)$ and the set $\Lambda_\chi/W_{\bullet}$. In particular, $$\left\vert\{\mbox{Blocks of}\,\,\, U_\chi(\fg)\}\right\vert=\left\vert \Lambda_\chi/W_{\bullet}\right\vert.$$
\end{conj}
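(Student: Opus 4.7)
I would follow the standard three-step strategy and verify that each step persists under the paper's weakened hypotheses. \emph{First}, reduce to the case $\chi=\chi_I$ in standard Levi form: by the Kac--Weisfeiler / Pommerening theory (under suitable $(G,p)$-hypotheses), every nilpotent $\chi \in \fg^{*}$ is $\Ad^{*}(G)$-conjugate to some $\chi_I$ vanishing on $\fb$ and supported on the negative root vectors indexed by a subset $I$ of the simple roots. Since $U_\chi(\fg)\cong U_{g\cdot\chi}(\fg)$ canonically, this reduction is cost-free. The advertised improvement over Jantzen's standard assumptions presumably lies in sharper statements of this conjugacy already available in the literature.

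\emph{Second}, for $\chi=\chi_I$ and a compatible Borel $\fb = \fh\oplus\fn^{+}$, introduce the baby Verma modules $Z_\chi(\lambda) = U_\chi(\fg)\otimes_{U_\chi(\fb)}\bK_\lambda$ for $\lambda\in\Lambda_\chi$. A PBW argument shows each $Z_\chi(\lambda)$ has constant positive dimension, hence a simple head $L_\chi(\lambda)$, and every simple $U_\chi(\fg)$-module arises this way. Thus $\lambda\mapsto [\text{block of }L_\chi(\lambda)]$ is a surjection $\Lambda_\chi \twoheadrightarrow \{\text{Blocks}\}$, and this surjection factors through $\Lambda_\chi/W_\bullet$: the dot-orbit compatibility comes from the Harish--Chandra description of the central characters of $Z_\chi(\lambda)$, combined with the action of the $p$-centre.

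\emph{Third (the main obstacle)}: injectivity of $\Lambda_\chi/W_\bullet \to \{\text{Blocks}\}$, i.e.\ the linkage principle $[Z_\chi(\lambda):L_\chi(\mu)]\neq 0 \Rightarrow \mu\in W_\bullet\cdot\lambda$. For $\chi=0$ this is the Andersen--Kaneda strong linkage theorem; for general $\chi$ of standard Levi form the route is via Friedlander--Parshall parabolic induction (which for such $\chi$ is an equivalence of categories onto a Serre subcategory of $U_\chi(\fg)$-mod) to a Levi $\fl$ on which $\chi|_\fl$ is regular nilpotent, combined with translation functors matching $U_\chi(\fl)$-modules with $U_0(\fl)$-modules. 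Each ingredient has its own hypothesis on $(G,p)$, and the new content of this note should be checking that they all carry through under the weakened assumptions.

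For the exceptional case $\fg$ of type $G_2$ in characteristic $3$, standard Levi form is unavailable for certain nilpotent orbits, so the above strategy breaks at Step 1. Here I would expect a direct orbit-by-orbit analysis: the appendix's computation of $\dim\fg_\chi$, where $\fg_\chi = \{x\in\fg : \ad^{*}(x)\chi = 0\}$ is the centraliser, bounds the dimensions of simple $U_\chi(\fg)$-modules via the Kac--Weisfeiler inequality $\dim L \mid p^{(\dim\fg - \dim\fg_\chi)/2}$. Combining this constraint with explicit character-theoretic calculations should allow one to count and match blocks with $W_\bullet$-orbits on $\Lambda_\chi$ directly. This is where I expect the most creativity, since the general reduction theorems are no longer available and the argument must be tailored to the small number of exceptional orbits.
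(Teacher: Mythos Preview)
Your proposal has the two halves of the bijection reversed, and this leads you to misidentify where the real work lies. The map the paper constructs goes from blocks to $\Lambda_\chi/W_\bullet$: a block determines a central character of $U(\fg)^G$, and via the Kac--Weisfeiler isomorphism $U(\fg)^G\cong S(\fh)^{W_\bullet}$ this determines a $W_\bullet$-orbit. That map is well-defined and surjective under assumption (A) alone (Proposition~\ref{Lower}). What you call ``the main obstacle'' --- the linkage principle $[Z_\chi(\lambda):L]\neq 0\Rightarrow L$ has highest weight in $W_\bullet\lambda$ --- is an immediate consequence of this central-character argument, not the hard step. The hard step is the \emph{opposite} implication: that weights in the same $W_\bullet$-orbit give simples in the same block, equivalently that $|\{\text{Blocks}\}|\leq |\Lambda_\chi/W_\bullet|$. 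Your ``factors through $\Lambda_\chi/W_\bullet$ via Harish--Chandra'' dismisses exactly the difficult part.

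The paper's method for this upper bound is entirely different from the Friedlander--Parshall/translation route you sketch. It is geometric: for $\chi$ regular nilpotent in standard Levi form, each $Z_\chi(\lambda)$ is indecomposable (unique maximal submodule), so all its composition factors lie in one block and $U_\chi(\fg)$ has exactly $|\Lambda_0/W_\bullet|$ blocks. One then uses Gabriel's semicontinuity --- the set of $\chi\in\fb^\perp$ with at most $k$ blocks is Zariski-closed --- together with density of the regular nilpotent orbit in $\fb^\perp$ (which is where assumption (C), the $G$-equivariant isomorphism $\fg\cong\fg^*$, is used). No reduction to standard Levi form for general $\chi$ is needed or claimed; indeed such a reduction is false in general. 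For $G_2$ in characteristic $3$ the obstruction is not that standard Levi form fails but that (C) fails; the paper circumvents this by showing (via Premet's freeness criterion applied with $\fm=\fn^-$) that for the regular nilpotent $\chi$ every $Z_\chi(\lambda)$ is already irreducible, so the set $S_\lambda=\{\chi:Z_\chi(\lambda)\text{ irreducible}\}$ is nonempty, hence open dense, and the same semicontinuity argument finishes (Proposition~\ref{prop1}, Corollary~\ref{BlockG23}). There is no orbit-by-orbit case analysis.
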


Here, $\Lambda_\chi$ is a certain finite subset of $\fh^{*}$, where $\fh$ is the Lie algebra of a maximal torus $T$ of $G$, and $W$ is the Weyl group of $(G,T)$, which acts on $\fh^{*}$ via the dot-action and thus induces an equivalence relation on $\Lambda_\chi$. The requirement that $\chi$ is nilpotent means that $\chi$ vanishes on the Lie algebra $\fb$ of a Borel subgroup $B$ of $G$ (which we may assume contains $T$).

This conjecture was proved by Humphreys \cite{Hu3.1} in 1971 for $\chi=0$, subject to the requirements that $G$ be semisimple and that $p>h$, where $h$ is the Coxeter number of $(G,T)$. Humphreys then extended the result further to $\chi$ in so-called standard Levi form in 1998 in \cite{Hu.1} (the paper \cite{Hu.1} doesn't explicitly state what assumptions are being made, but the argument holds for any connected reductive algebraic group whose derived group is simply-connected). Under three assumptions (which we will call Jantzen's standard assumptions \cite{J1.1,J2.1} and denote (A), (B) and (C)), the conjecture was then proved by Brown and Gordon in \cite{BG.1} for all $\chi\in\fg^{*}$ when $p>2$, and then improved by Gordon in \cite{Go.1} to include the $p=2$ case (so long as (A), (B) and (C) still hold). In fact, under assumptions (A), (B) and (C), Humphreys' conjecture on blocks allows us to count the number of blocks of $U_\chi(\fg)$ for {\em all} $\chi\in\fg^{*}$, as these assumptions are sufficient to reduce the computation to the case of nilpotent $\chi$ (see \cite{FP1.1}, also Remark~\ref{NilpRed}, {\em infra}). Furthermore, Braun \cite{Br.1} recently proved the conjecture for $\fg=\fs\fl_n$ with $p\vert n$, where assumptions (A) and (B) hold but (C) doesn't. In this case, however, the restriction to nilpotent $\chi$ is necessary, as the analogous result for semisimple $\chi$ was shown in \cite{Br.1} to fail when $p=n=3$.

Let us now explain Jantzen's standard assumptions. These are: (A) that the derived group of $G$ is simply-connected; (B) that the prime $p$ is good for $G$; and (C) that there exists a non-degenerate $G$-invariant bilinear form on $\fg$. The primes that are not good for a given $G$ can be listed explicitly (and are all less that or equal to $5$), and the existence of a non-degenerate $G$-invariant bilinear form on $\fg$ holds whenever $\fg$ is simple.

The question motivating this note is: what happens to Humphreys' conjecture on blocks for nilpotent $p$-characters if we remove assumptions (B) and/or (C)? We see in Section~\ref{sec3} that there is a natural surjection $f:\{\mbox{Blocks of}\,\,\, U_\chi(\fg)\}\to\Lambda_\chi/W_{\bullet}$ under only assumption (A). It turns out that this can be deduced from the literature \cite{J2.1,KW.1}. Furthermore, we show in Theorem~\ref{BlockNumb} that the known proof of the injectivity of $f$ works without assumption (B). This therefore confirms Humphreys' conjecture for the almost-simple groups over algebraically closed fields of the following bad characteristics:

\begin{cor}\label{ASGps}
	Let $G$ be an almost-simple group over an algebraically closed field $\bK$ of bad characteristic $p>0$. Then Humphreys' conjecture holds for $G$ when $p=2$ and $G$ is of type $E_6, E_8$ or $G_2$, when $p=3$ and $G$ is of type $E_7, E_8$ or $F_4$, and when $p=5$ and $G$ is of type $E_8$.
\end{cor}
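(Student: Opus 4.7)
The plan is to apply Theorem~\ref{BlockNumb} directly, since that theorem establishes Humphreys' conjecture under assumptions (A) and (C) alone. The corollary therefore reduces to checking that (A) and (C) both hold in each listed situation, while (B) is permitted to fail by hypothesis on the characteristic.

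First I would reduce to the simply-connected case. For an almost-simple $G$, the simply-connected cover $\tilde{G}$ has the same Lie algebra $\fg$, so $U_\chi(\fg)$ is unchanged; the change of weight lattice does not affect the construction of $\Lambda_\chi$ or the dot-orbit count in a way that alters either cardinality in the conjecture. After this reduction, assumption (A) holds automatically.

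Next I would verify (C) case by case for the listed combinations of type and bad prime. The bad primes are $p=2$ for the classical types, $p=2,3$ for $G_2, F_4, E_6, E_7$, and $p=2,3,5$ for $E_8$. A standard classification of invariant bilinear forms on simple Lie algebras in bad characteristic shows that a non-degenerate $G$-invariant bilinear form on $\fg$ exists precisely in the exceptional-type combinations listed in the statement: $(G_2,p=2)$, $(E_6,p=2)$, $(E_8,p=2)$, $(F_4,p=3)$, $(E_7,p=3)$, $(E_8,p=3)$, and $(E_8,p=5)$. In each such case both (A) and (C) hold, and Theorem~\ref{BlockNumb} then yields the required bijection. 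The excluded combinations (all classical types in $p=2$, together with $F_4$ and $E_7$ in $p=2$ and $G_2$ and $E_6$ in $p=3$) are exactly those in which (C) fails; for $G_2$ in $p=3$ the conjecture must be addressed by a separate argument, as flagged in the abstract.

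The main obstacle in this argument is the case-by-case verification of (C) for exceptional types in bad characteristic, but this is well-documented in the literature on invariant forms and once it is in hand the remainder of the deduction from Theorem~\ref{BlockNumb} is essentially formal.
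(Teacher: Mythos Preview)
Your approach is essentially the paper's: deduce the corollary from Theorem~\ref{BlockNumb} by checking that (A) and (C) hold in each listed case. The paper simply remarks after that theorem that the implication is ``straightforward,'' and your verification of (C) via the simplicity of $\fg$ is exactly how the Appendix phrases the criterion.

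One caution on your reduction step: the assertion that the simply-connected cover $\tilde G$ has the same Lie algebra as $G$ is \emph{not} true in general (e.g.\ $\mathfrak{sl}_p$ versus $\mathfrak{pgl}_p$). It happens to hold in every case listed in the corollary because the order of the fundamental group ($1$ for $G_2$, $F_4$, $E_8$; $3$ for $E_6$ at $p=2$; $2$ for $E_7$ at $p=3$) is coprime to $p$, so the central isogeny is \'etale. You should either make this explicit or, more simply, observe that in the paper's context ``almost-simple'' is implicitly taken to be simply-connected (since assumption (A) is standing throughout), in which case the reduction is unnecessary.
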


We also provide a different approach to the proof of the injectivity in Proposition~\ref{prop1}, which demonstrates that injectivity in fact holds whenever there exists a collection of irreducible modules of a certain nice form (namely, which are so-called {\bf baby Verma modules}). Premet's theorem \cite{Pr1.1} shows the existence of such irreducible modules under assumptions (A), (B) and (C), and we observe in Corollary~\ref{BlockG23} that the existence also holds for the almost-simple algebraic group of type $G_2$ in characteristic 3 (where assumption (C) fails). This thus proves Humphreys' conjecture on blocks for $G_2$ in characteristic 3, which could not be deduced using the previous approach.

In the Appendix, we conduct some calculations with a view to finding other examples where these irreducible modules exist. Unfortunately, the calculations do not lead to further examples, but we hope the calculations are interesting in their own right, as they demonstrate divisibility bounds for irreducible modules for certain nice $\chi$ and small primes.

{\bf Statements and Declarations:} The author was supported during this research by the Engineering and Physical Sciences Research Council, grant EP/R018952/1, and later by a research fellowship from the Royal Commission for the Exhibition of 1851.

{\bf Acknowledgments:} The author would like to thank Ami Braun and Dmitriy Rumynin for suggesting this question, and Simon Goodwin for engaging in many useful discussions regarding this subject and comments on earlier versions of this paper.

\section{Preliminaries on Lie algebras}\label{sec2}

Throughout this note we work with a connected algebraic group $G$ over an algebraically closed field $\bK$ of characteristic $p>0$. More precise assumptions on $G$ are given section-by-section, but it is always at least a reductive algebraic group with simply-connected derived subgroup. Inside $G$, we fix a maximal torus $T$ and a Borel subgroup $B$ of $G$ containing $T$. Write $X(T)$ for the character group of $T$, $Y(T)$ for the cocharacter group of $T$, and $\langle\cdot,\cdot\rangle:X(T)\times Y(T)\to\bZ$ for the natural pairing. We write $\fg$ for the Lie algebra of $G$, $\fb$ for the Lie algebra of $B$ and $\fh$ for the Lie algebra of $T$. As Lie algebras of algebraic groups these are all restricted, so come equipped with $p$-th power maps $\fg\to\fg$ (resp. $\fb\to\fb$, $\fh\to\fh$) written $x\mapsto x^{[p]}$ .

Set $\Phi$ to be the root system of $G$ with respect to $T$, $\Phi^{+}$ to be the positive roots corresponding to $B$ and $\Pi$ to be the simple roots. For $\alpha\in \Phi$ we set $\alpha^\vee\in Y(T)$ to be the corresponding coroot, and we write $\fg_\alpha$ for the root space of $\alpha$ in $\fg$. We then define $\fn^{+}=\bigoplus_{\alpha\in\Phi^{+}}\fg_\alpha$ and $\fn^{-}=\bigoplus_{\alpha\in\Phi^{+}}\fg_{-\alpha}$, so $\fg=\fn^{-}\oplus\fh\oplus\fn^{+}$. For $\alpha\in \Phi$ we define $h_\alpha\coloneqq d\alpha^\vee(1)\in\fh$, and we choose $e_\alpha\in \fg_\alpha$ and $e_{-\alpha}\in\fg_{-\alpha}$ so that $[e_\alpha,e_{-\alpha}]=h_\alpha$ (see, for example, \cite{J4.1} for more details on this procedure). We also choose a basis $h_1,\ldots,h_d$ of $\fh$ with the property that $h_i^{[p]}=h_i$ for all $1\leq i\leq d$.

Set $W$ to be the Weyl group of $\Phi$, which acts naturally on $X(T)$ and $\fh^{*}$. We fix $\rho\in X(T)\otimes_{\bZ}\bQ$ to be the half-sum of positive roots in $\Phi$. This then allows us to define the dot-action of $W$ on $X(T)$ as $w\cdot\lambda=w(\lambda+\rho)-\rho$ (noting that this action makes sense even if $\rho\notin X(T)$). When $\rho\in X(T)$, $d\rho(h_\alpha)=1$ for all $\alpha\in \Pi$. If $\rho\notin X(T)$, we may still define $d\rho\in\fh^{*}$ such that $d\rho(h_\alpha)=1$ for all $\alpha\in\Pi$, since the derived subgroup being simply-connected implies that these $h_\alpha$ are linearly independent in $\fh$. We may therefore define the dot action on $\fh^{*}$ similarly to how it was defined on $X(T)$. When we wish to specify that $W$ is acting through the dot-action, we may write $W_\bullet$ instead of $W$.

We write $U(\fg)$ for the universal enveloping algebra of $\fg$. We write $Z_p$ for the central subalgebra of $U(\fg)$ generated by all $x^p-x^{[p]}$ with $x\in\fg$, which we call the {\bf $p$-centre} of $U(\fg)$. Given $\chi\in\fg^{*}$, we write $U_\chi(\fg)$ for the reduced enveloping algebra $U_\chi(\fg)\coloneqq U(\fg)/\langle x^p-x^{[p]}-\chi(x)^p \,\vert\, x\in\fg\rangle$. Each irreducible $\fg$-module is finite-dimensional \cite[Theorem A.4]{J1.1} and so, by Schur's lemma, each irreducible $\fg$-module is a $U_\chi(\fg)$-module for some $\chi\in\fg^{*}$. For $\chi\in\fg^{*}$, we recall that the centraliser of $\chi$ in $\fg$ is defined as $c_{\fg}(\chi)\coloneqq \{x\in\fg\,\vert\,\chi([x,\fg])=0\}.$

The adjoint action of $G$ on $\fg$ induces the coadjoint action of $G$ on $\fg^{*}$, and if $\chi,\mu\in\fg^{*}$ lie in the same coadjoint $G$-orbit then $U_\chi(\fg)\cong U_\mu(\fg)$. The derived group of $G$ being simply-connected implies (see \cite{J2.1,KW.1}) that any $\mu\in\fg^{*}$ lies in the same $G$-orbit as some $\chi\in\fg^{*}$ with $\chi(\fn^{+})=0$. Putting these two observations together, we always assume $\chi(\fn^{+})=0$ throughout this paper. 

We can define, for each $\lambda\in\fh^{*}$, a one-dimensional $\fb$-module $\bK_\lambda$ on which $\fn^{+}$ acts as zero and $\fh$ acts via $\lambda$. The assumption that $\chi(\fn^{+})=0$ means that $\bK_\lambda$ is a $U_\chi(\fb)$-module if and only if $\lambda\in\Lambda_\chi$, where \begin{equation*}
	\begin{split}
		\Lambda_\chi & \coloneqq \{\lambda\in\fh^{*}\mid\lambda(h)^p-\lambda(h^{[p]})=\chi(h)^p\,\,\mbox{for all}\,\, h\in\fh\} \\ & = \{\lambda\in\fh^{*}\mid\lambda(h_i)^p-\lambda(h_i)=\chi(h_i)^p\,\,\mbox{for all}\,\, 1\leq i\leq d\}
	\end{split}
\end{equation*} and that all irreducible $U_\chi(\fb)$-modules are of this form. We therefore may define the {\bf baby Verma module} $Z_\chi(\lambda)=U_\chi(\fg)\otimes_{U_{\chi}(\fb)}\bK_\lambda$, a $U_\chi(\fg)$-module of dimension $p^N$, where $N=\left\vert\Phi^{+}\right\vert$. Every irreducible $U_\chi(\fg)$-module is the quotient of some baby Verma module (see \cite[Lem. B.4]{J1.1}).

Since $W_\bullet$ acts on $\fh^{*}$, we may define an equivalence relation on $\Lambda_\chi$ by setting $\lambda\sim\mu$ if and only if there exists $w\in W$ with $w\cdot\lambda=\mu$. We write $\Lambda_\chi/W_{\bullet}$ for the set of equivalence classes of $\Lambda_\chi$ under this relation.

If $\chi(\fb)=0$ then $\Lambda_\chi=\Lambda_0=\{d\lambda\in\fh^{*}\,\vert\,\lambda\in X(T)\}=X(T)/pX(T)$. In this case, $W_\bullet$ in fact acts on $\Lambda_\chi$, so $\Lambda_\chi/W_\bullet$ is the set of $W_\bullet$-orbits for this action. The condition that $\chi(\fb)=0$ is sufficiently important in this paper that we make the definition $$\fb^{\perp}\coloneqq\{\chi\in\fg^{*}\,\vert\,\chi(\fb)=0\}.$$

We say that $\chi\in\fb^{\perp}$ is in {\bf standard Levi form} if there exists a subset $I\subseteq\Pi$ of simple roots such that $$\chi(e_{-\alpha})=\twopartdef{1}{\alpha\in I,}{0}{\alpha\in\Phi^{+}\setminus I.}$$ If $I=\Pi$ we say that $\chi$ is {\bf regular nilpotent in standard Levi form}. In general, we say $\chi\in\fg^{*}$ is {\bf regular nilpotent} if it is in the same $G$-orbit as the $\mu\in\fg^{*}$ which is regular nilpotent in standard Levi form.

\section{Preliminaries on Blocks}\label{sec3}

Let us briefly recall the definition of the {\bf blocks} of a finite-dimensional $\bK$-algebra $A$ (one can find more details in \cite[I.16, III.9]{BGo.1}, for example). We say that one irreducible $A$-module $M$ is {\bf linked to} another irreducible $A$-module $N$ if $\Ext^1(M,N)\neq 0$. This is not an equivalence relation, but we may refine it to one. The equivalence classes under the resulting equivalence relation are then called the {\bf blocks} of $A$.

In this note, we are concerned with the case of $A=U_\chi(\fg)$ with $\chi\in\fb^\perp$. Under assumptions (A), (B) and (C) the results in this section are well-known -- for example, they are contained within the proof of Proposition C.5 in \cite{J1.1}. Nonetheless, we recall them to highlight when assumptions (A), (B) and (C) are or are not necessary. Remember from Section~\ref{sec2} that each irreducible $U_\chi(\fg)$-module is a quotient of a baby Verma module $Z_\chi(\lambda)$, and thus all irreducible $U_\chi(\fg)$-modules appear as composition factors of baby Verma modules. Recall also that the {\bf Grothendieck group} $\sG (U_\chi(\fg))$ of the category of finite-dimensional $U_\chi(\fg)$-modules is the abelian group generated by symbols $[M]$, for $M$ running over the collection of all finite-dimensional $U_\chi(\fg)$-modules, subject to the relation that $[P]+[N]=[M]$ if 
$$0\to P\to M\to N\to 0$$ is a short exact sequence of $U_\chi(\fg)$-modules. It is clear that in $\sG (U_\chi(\fg))$ we have, for $\lambda\in\Lambda_0$, $$[Z_\chi(\lambda)]=\sum_{\tiny L\in\Irr(U_\chi(\fg))} [Z_\chi(\lambda):L][L],$$ where $\Irr(U_\chi(\fg))$ is the set of isomorphism classes of irreducible $U_\chi(\fg)$-modules and $[Z_\chi(\lambda):L]$ indicates the composition multiplicity of $L$ in $Z_\chi(\lambda)$.

We wish to define the map
$$f:\{\mbox{Blocks of}\,\, U_\chi(\fg)\}\to \{[Z_\chi(\lambda)]\,\vert\,\lambda\in \Lambda_0\}\subseteq \sG(U_\chi(\fg)),$$ as follows. Let $\fB$ be a block of $U_\chi(\fg)$, and let $E$ be an irreducible module in this block. There must exist $\lambda\in \Lambda_0$ such that $E$ is a quotient of $Z_\chi(\lambda)$. We then define $f(\fB)=[Z_\chi(\lambda)]$. 

For this to be well-defined, it is necessary to see that it does not depend on our choice of $E\in\fB$ or on our choice of $Z_\chi(\lambda)\twoheadrightarrow E$. For this, we note that $U(\fg)^G\subseteq Z(U(\fg))$ acts on the baby Verma module $Z_\chi(\lambda)$ via scalar multiplication as follows. Under the assumption that the derived group of $G$ is simply-connected (assumption (A)), the argument of Kac and Weisfeiler in \cite[Th. 1]{KW.1} (c.f. \cite[Th. 9.3]{J2.1}) shows that there exists an isomorphism $\pi:U(\fg)^G\to S(\fh)^{W_{\bullet}}$, where the dot-action on $S(\fh)$ is obtained by identifying $S(\fh)$ with the algebra $P(\fh^{*})$ of polynomial functions on $\fh^{*}$ and then defining $(w\cdot F)(\lambda)=F(w^{-1}\cdot\lambda)$ for $w\in W$, $F\in P(\fh^{*})$ and $\lambda\in\fh^{*}$. This isomorphism allows us, as in \cite{J2.1}, to define a homomorphism $\cen_{\lambda}:U(\fg)^G\to\bK$ which sends $u\in U(\fg)^G$ to $\pi(u)(\lambda)$, viewing $\pi(u)$ as an element of $P(\fh^{*})$. Then $U(\fg)^G$ acts on $Z_\chi(\lambda)$ via the character $\cen_\lambda$, for $\lambda\in \Lambda_0$. 

If $E$ and $E'$ lie in the same block then it is easy to see that $U(\fg)^G$ must act the same on both modules, and if $Z_{\chi}(\lambda_E)\twoheadrightarrow E$ and $Z_{\chi}(\lambda_{E'})\twoheadrightarrow E'$ then $U(\fg)^G$ acts on $E$ via $\cen_{\lambda_E}$ and on $E'$ by $\cen_{\lambda_{E'}}$. Thus, $\cen_{\lambda_E}=\cen_{\lambda_{E'}}$ and so, as in \cite[Cor. 9.4]{J2.1} (see also \cite[Th. 2]{KW.1}), we have $\lambda_E\in W_\bullet \lambda_{E'}$. One may then observe, using \cite[C.2]{J1.1}, that $[Z_\chi(\lambda_E)]=[Z_{\chi}(\lambda_{E'})]$. This shows that $f$ is well-defined. Furthermore, $f$ is clearly surjective (just take the block containing an irreducible quotient of the desired $Z_{\chi}(\lambda)$).

The above discussion also shows that $[Z_\chi(\lambda)]\cong [Z_\chi(\mu)]$ if and only if $\lambda\in W_\bullet\mu$. Thus, there is a bijection $$\{[Z_\chi(\lambda)]\,\vert\,\lambda\in \Lambda_0\}\leftrightarrow \Lambda_0/W_\bullet.$$ In particular, we get the following proposition (which also may more-or-less be found in \cite[C.5]{J1.1}), observing that at no point thus far have we required assumptions (B) or (C). 

\begin{prop}\label{Lower}
	Let $G$ be a connected reductive algebraic group over an algebraically closed field $\bK$ of characteristic $p>0$, with simply-connected derived subgroup, and let $\chi\in\fb^\perp$. Then there exists a natural surjection between the set of blocks of $U_\chi(\fg)$ and the set $\Lambda_\chi/W_{\bullet}=\Lambda_0/W_{\bullet}$. In particular, $$\left\vert\{\mbox{Blocks of}\,\,\, U_\chi(\fg)\}\right\vert\geq\left\vert \Lambda_0/W_{\bullet}\right\vert.$$
\end{prop}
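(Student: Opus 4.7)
The plan is essentially to assemble into a proof the ingredients that the preceding discussion has already laid out. First, I would define $f$ concretely: for a block $\fB$, choose any irreducible $E\in\fB$, use the fact that every irreducible $U_\chi(\fg)$-module is a quotient of some baby Verma module $Z_\chi(\lambda)$ (and that $\chi\in\fb^\perp$ forces $\Lambda_\chi=\Lambda_0$), and set $f(\fB)$ to be the class of $\lambda$ in $\Lambda_0/W_\bullet$. The main content of the proof is then checking that this assignment is independent of the choices of $E\in\fB$ and of the surjection $Z_\chi(\lambda)\twoheadrightarrow E$.

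The central mechanism is the Kac-Weisfeiler isomorphism $\pi:U(\fg)^G\to S(\fh)^{W_\bullet}$, which under assumption (A) alone (see \cite{KW.1}, \cite[Th.~9.3]{J2.1}) yields central characters $\cen_\lambda:U(\fg)^G\to\bK$ satisfying $\cen_\lambda=\cen_\mu$ precisely when $\lambda\in W_\bullet\mu$ (as in \cite[Cor.~9.4]{J2.1}). I would verify that $U(\fg)^G$ acts on $Z_\chi(\lambda)$ through $\cen_\lambda$ by reducing any $z\in U(\fg)^G$ modulo $\fn^{+}$ and evaluating against $\lambda$ on the highest-weight generator. Since $U(\fg)^G$ lies in the centre of $U(\fg)$, each of its elements acts by a single scalar on every irreducible (Schur); moreover any two irreducibles linked by a nonsplit extension must share that scalar, hence so must any two irreducibles in the same block. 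Applied to $E,E'\in\fB$ with $Z_\chi(\lambda_E)\twoheadrightarrow E$ and $Z_\chi(\lambda_{E'})\twoheadrightarrow E'$, this forces $\cen_{\lambda_E}=\cen_{\lambda_{E'}}$ and hence $\lambda_E\in W_\bullet\lambda_{E'}$, giving well-definedness.

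Surjectivity of $f$ is then immediate: any $\lambda\in\Lambda_0$ gives a nonzero baby Verma $Z_\chi(\lambda)$, which admits at least one irreducible quotient $E$ whose block $\fB$ satisfies $f(\fB)=[\lambda]$. The point I would emphasize throughout -- since this is really what distinguishes the present statement from \cite[Prop.~C.5]{J1.1} -- is that none of the above invokes assumption (B) (goodness of $p$) or (C) (existence of a non-degenerate $G$-invariant form on $\fg$): assumption (A) is needed only to make sense of $\rho$ and the $h_\alpha$ as linearly independent elements of $\fh$ (so that dot-actions and $\Lambda_0$ are well-defined on $\fh^{*}$) and to invoke Kac-Weisfeiler. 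I do not anticipate a serious obstacle; the delicate point is bookkeeping, namely explicitly identifying at each step which hypotheses are actually in play, rather than any new mathematical input.
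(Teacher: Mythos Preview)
Your proposal is correct and follows essentially the same route as the paper: the discussion preceding the proposition already defines $f$ via a choice of irreducible $E$ and a surjection $Z_\chi(\lambda)\twoheadrightarrow E$, uses the Kac--Weisfeiler isomorphism $U(\fg)^G\cong S(\fh)^{W_\bullet}$ under assumption (A) alone to show $\cen_{\lambda_E}=\cen_{\lambda_{E'}}\Rightarrow\lambda_E\in W_\bullet\lambda_{E'}$, and observes surjectivity immediately. The only cosmetic difference is that the paper's $f$ lands in $\{[Z_\chi(\lambda)]\}\subseteq\sG(U_\chi(\fg))$ and then identifies this set with $\Lambda_0/W_\bullet$ (via \cite[C.2]{J1.1}), whereas you map directly to $\Lambda_0/W_\bullet$; this shortcut is harmless since you never need the Grothendieck-group formulation for the surjection statement.
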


\begin{rmk}
	We have used in the above argument the fact that, when assumption (A) holds, there exists an isomorphism $U(\fg)^G\xrightarrow{\sim}S(\fh)^{W_\bullet}$. This result dates back to Kac and Weisfeiler \cite{KW.1}, who proved it for connected almost-simple algebraic groups under the assumption that $G\neq SO_{2n+1}(\bK)$ when $p=2$.\footnote{In \cite[Th. 1]{KW.1} it is required that either $p\neq 2$ or $\rho\in X(T)$, where $\rho$ is the half sum of positive roots. This is then generalised to the given assumptions in \cite[Th. 1 BIS]{KW.1}. The $W$-action used in the latter theorem can be easily seen to be the same as the dot-action we are using.} According to Janzten \cite[Rem. 9.3]{J2.1}, the argument of Kac and Weisfeiler holds for reductive $\fg$ whenever assumption (A) holds. Jantzen further gives an argument \cite[9.6]{J2.1} using reduction mod $p$ techniques which holds under his standard assumptions. In fact, slightly weaker assumptions are sufficient: assumption (B) is only needed to ensure $p$ is not a so-called torsion prime of $\Phi^\vee$ (in the sense of \cite[Prop. 8]{Dem.1}), which is also satisfied for the bad prime 3 in case $G_2$, while assumption (C) is only needed to ensure that the (derivatives of the) simple roots are linearly independent in $\fh^{*}$, which is also satisfied for $p=2$ in type $F_4$ and $p=3$ in type $G_2$. In particular, the argument of Kac-Weisfeiler is unnecessary for our later result (Corollary~\ref{BlockG23}) that Humphreys' conjecture on blocks holds for the almost-simple algebraic group of type $G_2$ in characteristic 3.
\end{rmk}

\section{Upper bound}

Humphreys' conjecture on blocks claims that the map $f$ defined in the previous section is, in fact, a bijection. What remains, therefore, is to show that  $$\left\vert\{\mbox{Blocks of}\,\,\, U_\chi(\fg)\}\right\vert\leq\left\vert \Lambda_0/W_{\bullet}\right\vert.$$ Gordon \cite{Go.1} has shown that this inequality holds under assumptions (A), (B) and (C), and a similar argument is reproduced in \cite[C.5]{J1.1}. We give a version of this argument here in order to observe that it does not require assumption (B), and to highlight where assumption (C) is necessary:

The discussion in Section~\ref{sec3} shows that $U_\chi(\fg)$ has $\left\vert \Lambda_0/W_{\bullet}\right\vert$ blocks if, for each $\lambda\in\Lambda_0$, all composition factors of the baby Verma module $Z_\chi(\lambda)$ lie in the same block. This property holds for the $\mu\in\fg^{*}$ which is regular nilpotent in standard Levi form, since the corresponding baby Verma module has a unique maximal submodule and so is indecomposable, and it is well-known that all composition factors of an indecomposable module lie in the same block. Therefore $U_\chi(\fg)$ has $\left\vert \Lambda_0/W_{\bullet}\right\vert$ blocks for all $\chi$ in the $G$-orbit of $\mu$. 

Suppose now that the intersection of $\fb^\perp$ with the $G$-orbit of $\mu$ is dense in $\fb^\perp$. By \cite[Prop. 2.7]{Ga.1}, $$D_{\left\vert \Lambda_0/W_{\bullet}\right\vert}\coloneqq\{\chi\in\fb^{\perp}\,\vert\,U_\chi(\fg)\,\,\mbox{has at most}\,\,\left\vert \Lambda_0/W_{\bullet}\right\vert\,\,\mbox{blocks}\}$$ is closed in $\fb^{\perp}$. Since $(G\cdot\mu)\cap\fb^{\perp}\subseteq D_{\left\vert \Lambda_0/W_{\bullet}\right\vert}$, Humphreys' conjecture on blocks would follow.
	
When can we say that $(G\cdot\mu)\cap\fb^\perp$ is dense in $\fb^\perp$? Well, if there exists a $G$-equivariant isomorphism $\Theta:\fg\xrightarrow{\sim}\fg^{*}$, we can set $y\coloneqq\Theta^{-1}(\mu)$. Then \cite[6.3, 6.7]{J3.1} (which make no assumptions on $p$) establish that the $G$-orbit of $y$ is dense in the nilpotent cone $\cN$ of $\fg$, and so the $G$-orbit of $\mu$ is dense in $\Theta(\cN)$. Thus, $(G\cdot\mu)\cap \fb^\perp$ is dense in $\fb^{\perp}$, and so (cf. \cite[Th. 3.6]{Go.1}) under assumptions (A) and (C) we get Humphreys' conjecture of blocks:

\begin{theorem}\label{BlockNumb}
	Let $G$ be a connected reductive algebraic group over an algebraically closed field $\bK$ of characteristic $p>0$, with simply-connected derived subgroup. Suppose that there exists a $G$-module isomorphism $\Theta:\fg\xrightarrow{\sim}\fg^{*}$. Let $\chi\in\fb^{\perp}$. Then $$\left\vert\{\mbox{Blocks of}\,\,\, U_\chi(\fg)\}\right\vert=\left\vert \Lambda_\chi/W_{\bullet}\right\vert.$$
\end{theorem}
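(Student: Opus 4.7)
The plan is to establish the reverse inequality $\vert\{\text{Blocks of } U_\chi(\fg)\}\vert \leq \vert\Lambda_0/W_\bullet\vert$, which combined with Proposition~\ref{Lower} yields the theorem. I would follow the Brown--Gordon--Jantzen strategy sketched immediately before the statement, in three stages, with the $G$-module isomorphism $\Theta$ substituting for assumption (C) at precisely one point.

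First, I would single out the distinguished $p$-character $\mu\in\fb^\perp$ that is regular nilpotent in standard Levi form. For this $\mu$, a standard weight-space argument using that $\mu(e_{-\alpha})=1$ for all $\alpha\in\Pi$ shows that each baby Verma module $Z_\mu(\lambda)$, $\lambda\in\Lambda_0$, admits a unique maximal submodule, hence is indecomposable. All its composition factors therefore lie in a single block, and because every irreducible $U_\mu(\fg)$-module is a composition factor of some $Z_\mu(\lambda)$, the Grothendieck-group analysis of Section~\ref{sec3} gives at most $\vert\Lambda_0/W_\bullet\vert$ blocks for $U_\mu(\fg)$. Coadjoint equivariance $U_\chi(\fg)\cong U_{\chi'}(\fg)$ for $\chi,\chi'$ in a common $G$-orbit then propagates the bound to every $\chi\in(G\cdot\mu)\cap\fb^\perp$.

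Second, the inequality should be extended from $(G\cdot\mu)\cap\fb^\perp$ to all of $\fb^\perp$ by semicontinuity: by \cite[Prop.~2.7]{Ga.1}, the set $D_n\coloneqq\{\chi\in\fb^\perp\,\vert\,U_\chi(\fg)\text{ has at most }n\text{ blocks}\}$ is closed in $\fb^\perp$, so it suffices to prove that $(G\cdot\mu)\cap\fb^\perp$ is dense in $\fb^\perp$. This density step is where $\Theta$ is essential, and I expect it to be the main obstacle in the absence of a $G$-invariant nondegenerate form on $\fg$. Setting $y\coloneqq\Theta^{-1}(\mu)$, I would invoke the characteristic-free results \cite[6.3, 6.7]{J3.1} to conclude that $G\cdot y$ is open dense in the nilpotent cone $\cN\subseteq\fg$, and then transport this through the $G$-equivariant $\Theta$ to obtain that $G\cdot\mu$ is open dense in $\Theta(\cN)\subseteq\fg^*$. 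Since every element of $\fb^\perp$ is a nilpotent $p$-character, $\fb^\perp\subseteq\Theta(\cN)$; hence $(G\cdot\mu)\cap\fb^\perp$ is a nonempty open subset of the irreducible affine space $\fb^\perp$, and so dense in it. Combining the three stages yields the desired upper bound and completes the proof.
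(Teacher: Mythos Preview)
Your proposal is correct and follows essentially the same approach as the paper: you establish the upper bound by showing the baby Verma modules for the regular nilpotent $\mu$ in standard Levi form are indecomposable, invoke Gabriel's semicontinuity result \cite[Prop.~2.7]{Ga.1} for the closedness of $D_{\vert\Lambda_0/W_\bullet\vert}$, and use $\Theta$ together with \cite[6.3, 6.7]{J3.1} to transport the density of the regular nilpotent orbit in $\cN$ to density of $(G\cdot\mu)\cap\fb^\perp$ in $\fb^\perp$. The one place you are slightly informal is the implication ``$\chi\in\fb^\perp$ is a nilpotent $p$-character, hence $\fb^\perp\subseteq\Theta(\cN)$''; this is true (by $T$-equivariance of $\Theta$ one has $\Theta^{-1}(\fb^\perp)=\fn^+\subseteq\cN$), but the paper is equally brief at this step, so your level of detail matches the original.
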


\begin{rmk}
	It is straightforward to see that this theorem implies Corollary~\ref{ASGps} from the Introduction.
\end{rmk}

\begin{rmk}\label{NilpRed}
	Suppose $\chi\in\fg^{*}$ with $\chi(\fn^{+})=0$. Under assumption (C), there exists a $G$-module isomorphism $\Theta:\fg\to\fg^{*}$, so we may fix $x\in\fg$ such that $\Theta(x)=\chi$. In $\fg$ it is well-known that $x$ has a (unique) Jordan decomposition $x=x_s+x_n$, where $x_s$ is semisimple, $x_n$ is nilpotent, and $[x_s,x_n]=0$, and thus we may define the Jordan decomposition $\chi=\chi_s+\chi_n$ where $\chi_s=\Theta(x_s)$ and $\chi_n=\Theta(x_n)$. In fact, Kac and Weisfeiler \cite[Th. 4]{KW.1} show that a Jordan decomposition of $\chi$ may be defined even when assumption (C) does not hold, so long as assumption (A) does instead: we say that $\chi=\chi_s+\chi_n$ is a Jordan decomposition if there exists $g\in G$ such that $g\cdot\chi_s(\fn^{+}\oplus\fn^{-})=0$, $g\cdot\chi_n(\fh\oplus\fn^{+})=0$, and, for $\alpha\in\Phi^{+}$, $g\cdot\chi(h_\alpha)\neq 0$ only if $g\cdot\chi(e_{\pm\alpha})=0$. Under assumptions (A) and (B), Friedlander and Parshall \cite{FP1.1} show that there is an equivalence of categories between $\{U_\chi(\fg)-\mbox{modules}\}$ and $\{U_\chi(\fc_{\fg}(\chi_s))-\mbox{modules}\}$ (the categories of finite-dimensional modules). It can then further be shown under those assumptions (see, for example, \cite[B.9]{J1.1}) that there is an equivalence of categories between $\{U_\chi(\fc_{\fg}(\chi_s))-\mbox{modules}\}$ and $\{U_{\chi_n}(\fc_{\fg}(\chi_s))-\mbox{modules}\}$. Under assumptions (A) and (B), this then often allows us to reduce representation-theoretic questions to the case of nilpotent $\chi$.
	
	When assumption (C) holds, we may do this for Humphreys' conjecture on blocks (we assume here that $\chi$ is chosen so that $g$ may be taken as $1$ in the definition of the Jordan decomposition, recalling that reduced enveloping algebras are unchanged by the coadjoint $G$-action on their corresponding $p$-character). The equivalence of categories between $\{U_\chi(\fg)-\mbox{modules}\}$ and $\{U_{\chi_n}(\fl)-\mbox{modules}\}$ (where $\fl\coloneqq \fc_{\fg}(\chi_s)$) clearly preserves the number of blocks of the respective algebras. Thus, Humphreys' conjecture on blocks for $(\fl,\chi_n)$ will imply it for $(\fg,\chi)$ if and only if $\left\vert\Lambda_\chi/W_{\bullet}\right\vert=\left\vert\Lambda_{\chi_n} /W'_\bullet\right\vert$, where $W'$ is the Weyl group corresponding to $\fl$. What is $W'$? Well, the root system for $\fl$ is $\{\alpha\in\Phi\mid \chi_s(h_\alpha)= 0\}$ so it is easy to see that $W'$ lies inside $W(\Lambda_\chi)$, the set of $w\in W$ which fix $\Lambda_\chi$ setwise (it is straightforward to see under our assumptions that it doesn't matter in defining this subgroup whether we consider the usual action or the dot-action of $W$, since $\rho\in \Lambda_0$). When assumption (C) holds, $W(\Lambda_\chi)$ is parabolic (see \cite[Lem. 7]{MR.1}, \cite[Prop. 1.15]{Hu4.1}), and so one can easily check that $W'=W(\Lambda_\chi)$ in this case (see \cite[Rem. 3.12(3)]{BG.1}). This then obviously implies that  $\left\vert\Lambda_\chi/W_{\bullet}\right\vert=\left\vert\Lambda_{\chi} /W'_\bullet\right\vert$, and so what remains is to show that $\left\vert\Lambda_{\chi} /W'_\bullet\right\vert=\left\vert\Lambda_{\chi_n} /W'_\bullet\right\vert$. One can check that there exists $\lambda\in \Lambda_\chi$ such that $w(\lambda)=\lambda$ for all $w\in W'=W(\Lambda_\chi)$. Then the map $\Lambda_\chi=\lambda+\Lambda_0\to \Lambda_0=\Lambda_{\chi_n}$, $\lambda+\tau\mapsto\tau$, induces a bijection $\Lambda_\chi/W'_\bullet\xrightarrow{\sim}\Lambda_{\chi_n}/W'_\bullet$ as required.
	
	Braun \cite[Th. 6.23, Ex. 6.25]{Br.1} has shown that when assumption (C) fails to hold, it can be the case that Humphreys' conjecture on blocks holds for nilpotent $\chi$ but fails for general $\chi$. Specifically, set $\fg=\fs\fl_3$, $p=3$, and choose $\chi\in\fs\fl_3^{*}$ such that $\chi(e_{11}-e_{22})=\chi(e_{22}-e_{33})\neq 0$ (using $e_{ij}$ for the usual basis elements of $\fg\fl_3$). Recalling that the Weyl group for $\fs\fl_3$ is the symmetric group $S_3$,  one can check that $W(\Lambda_\chi)=\{\Id,(1,2,3),(1,3,2)\}$ and so is not a parabolic subgroup of $W$. Thus, $W'\neq W(\Lambda_\chi)$ and so there can be linkages under $W$ which do not exist under $W'$.  In particular, choosing suitable $\chi$, one can use this to show that $\left\vert\Lambda_\chi/W_{\bullet}\right\vert<\left\vert\Lambda_{\chi_n} /W'_\bullet\right\vert$. Braun's argument then shows that the latter value is the number of blocks of $U_{\chi_n}(\fl)$ and so the number of blocks of $U_\chi(\fg)$. We note that this argument highlights that \cite[Lem. 7]{MR.1} requires the assumption that $p$ be very good for the root system.
\end{rmk}

The argument above highlights one approach to proving Humphreys' conjecture on blocks; namely, to obtain the desired result it suffices to find a dense subset of $\fb^{\perp}$ lying inside $D_{\left\vert \Lambda_\chi/W_{\bullet}\right\vert}$. Note that $\fb^\perp=\bK^N$, where $N=\left\vert\Phi^{+}\right\vert$, and recall that any non-empty open subset is dense in $\bK^N$ when it is equipped with the Zariski topology. For each $\lambda\in \Lambda_0$, define $$C_\lambda\coloneqq \{\chi\in\fb^{\perp}\,\vert\,\,\mbox{All composition factors of }\, Z_\chi(\lambda)\,\,\mbox{are in the same block of }\,U_\chi(\fg)\,\},$$ and define $$C\coloneqq\bigcap_{\lambda\in \Lambda_0}C_\lambda.$$ It is straightforward from the arguments in Section~\ref{sec3} to see that $C\subseteq D_{\left\vert \Lambda_\chi/W_{\bullet}\right\vert}$. Furthermore, if for each $\lambda\in \Lambda_0$ we can find a dense open subset $\widehat{C}_\lambda$ of $\fb^{\perp}$ with $\widehat{C}_\lambda\subseteq C_\lambda$, then $$\widehat{C}\coloneqq\bigcap_{\lambda\in \Lambda_0}\widehat{C}_\lambda$$ would be a dense open subset of $\fb^{\perp}$ contained in $C\subseteq D_{\left\vert \Lambda_\chi/W_{\bullet}\right\vert}$. Finding the desired $\widehat{C}_\lambda$ therefore provides an approach to proving Humphreys' conjecture on blocks, and in the rest of this section we explore one particular way of obtaining such $\widehat{C}_\lambda$.

For each $\lambda\in \Lambda_0$, consider the set $$S_\lambda\coloneqq\{\chi\in\fb^{\perp}\,\vert\,Z_\chi(\lambda)\,\,\mbox{is an  irreducible } U_\chi(\fg)\mbox{-module}\}.$$ It is remarked in \cite[C.6]{J1.1} that $S_\lambda$ is open in $\fb^\perp$. Specifically, if we define, for $s=1,\ldots,p^N-1$, the set $$N_{\lambda,s}=\{\chi\in\fb^{\perp}\,\vert\,Z_\chi(\lambda)\,\,\mbox{has a } U_\chi(\fg)\mbox{-submodule of dimension } s\},$$ then clearly $S_\lambda=\bigcap_{s=1}^{p^N-1}N_{\lambda,s}^c$ (where, for $X\subseteq \fb^\perp$, $X^c$ denotes $\fb^\perp\setminus X$). The openness of $S_\lambda$ then follows from the closure of each $N_{\lambda,s}$ in $\fb^\perp$ (which is proved in \cite[C.6]{J1.1}, and one can check that the proof doesn't use assumptions (B) or (C)).

\begin{prop}\label{prop1}
	Let $G$ be a connected reductive algebraic group over an algebraically closed field $\bK$ of characteristic $p>0$, with simply-connected derived subgroup. Let $\chi\in\fb^{\perp}$, and suppose that for each $\lambda\in \Lambda_0$ there exists $\mu_\lambda\in \fb^{\perp}$ such that $Z_{\mu_\lambda}(\lambda)$ is an irreducible $U_{\mu_\lambda}(\fg)$-module. Then $\left\vert\{\mbox{Blocks of}\,\, U_\chi(\fg)\}\right\vert=\left\vert \Lambda_\chi/W_{\bullet}\right\vert$.
\end{prop}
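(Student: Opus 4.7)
The plan is to exploit the framework established immediately before the proposition: it suffices to exhibit, for each $\lambda\in\Lambda_0$, a dense open subset $\widehat{C}_\lambda\subseteq\fb^{\perp}$ contained in the set $C_\lambda$ of $\chi$ for which all composition factors of $Z_\chi(\lambda)$ lie in a single block of $U_\chi(\fg)$. The hypothesis furnishes an obvious candidate, namely $\widehat{C}_\lambda=S_\lambda$. Note also that since $\chi\in\fb^{\perp}$ we have $\Lambda_\chi=\Lambda_0$, so both sides of the desired equality are stated in terms of the finite set $\Lambda_0/W_\bullet$.

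First I would observe the containment $S_\lambda\subseteq C_\lambda$: if $Z_\chi(\lambda)$ is itself irreducible, it has only one composition factor, and so trivially all of its composition factors lie in the same block. Next I would invoke the two already-quoted facts about $S_\lambda$: by the cited remark from \cite[C.6]{J1.1}, the set $S_\lambda$ is open in $\fb^{\perp}$; and by hypothesis $\mu_\lambda\in S_\lambda$, so $S_\lambda$ is non-empty. Since $\fb^{\perp}\cong\bK^N$ is irreducible in the Zariski topology, every non-empty open subset of $\fb^{\perp}$ is dense, and hence $S_\lambda$ is a dense open subset of $\fb^{\perp}$.

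Now the index set $\Lambda_0=X(T)/pX(T)$ is finite, so
\[
\widehat{C}\coloneqq\bigcap_{\lambda\in\Lambda_0} S_\lambda
\]
is a finite intersection of dense open subsets of $\fb^{\perp}$, hence itself a dense open subset of $\fb^{\perp}$. By the discussion preceding the proposition we have $\widehat{C}\subseteq C\subseteq D_{\left\vert\Lambda_0/W_\bullet\right\vert}$, and $D_{\left\vert\Lambda_0/W_\bullet\right\vert}$ is closed in $\fb^{\perp}$ by \cite[Prop. 2.7]{Ga.1}. Taking closures yields $D_{\left\vert\Lambda_0/W_\bullet\right\vert}=\fb^{\perp}$, so in particular $\chi\in D_{\left\vert\Lambda_0/W_\bullet\right\vert}$, i.e.\ $U_\chi(\fg)$ has at most $\left\vert\Lambda_0/W_\bullet\right\vert$ blocks.

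Combining this upper bound with the lower bound supplied by Proposition~\ref{Lower} gives the claimed equality. I do not foresee a genuine obstacle: the argument is essentially a bookkeeping assembly of three ingredients already in place (the openness of $S_\lambda$, the irreducibility of $\fb^{\perp}$ as a variety, and the closedness of $D_{\left\vert\Lambda_0/W_\bullet\right\vert}$). The only point requiring a small comment is the observation that $S_\lambda\subseteq C_\lambda$, which is immediate from the definition of a block but worth stating explicitly so that the role of the hypothesis is transparent.
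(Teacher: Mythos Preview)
Your proposal is correct and follows essentially the same approach as the paper's own proof: both take $\widehat{C}_\lambda=S_\lambda$, use the hypothesis to ensure each $S_\lambda$ is non-empty (hence dense open in $\fb^{\perp}$), observe the trivial containment $S_\lambda\subseteq C_\lambda$, and then conclude via closedness of $D_{\left\vert\Lambda_0/W_\bullet\right\vert}$ together with Proposition~\ref{Lower}. Your write-up is slightly more explicit about why the intersection remains dense (finiteness of $\Lambda_0$) and why $S_\lambda\subseteq C_\lambda$, but there is no substantive difference in strategy.
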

\begin{proof}
	Our assumption guarantees that each $S_\lambda$, for $\lambda\in\Lambda_0$, is non-empty. Each $S_\lambda$ is thus a dense open subset of $\fb^{\perp}$ and it is clear that $S_\lambda\subseteq C_\lambda$ for each $\lambda\in\Lambda_0$. We therefore have that $\bigcap_{\lambda\in \Lambda_0}S_\lambda$ is a dense open subset of $\fb^{\perp}$. Since $\bigcap_{\lambda\in \Lambda_0}S_\lambda\subseteq C\subseteq D_{\left\vert \Lambda_\chi/W_{\bullet}\right\vert}$ and $D_{\left\vert \Lambda_\chi/W_{\bullet}\right\vert}$ is closed in $\fb^\perp$, we conclude $\fb^\perp=D_{\left\vert \Lambda_\chi/W_{\bullet}\right\vert}$. Hence, $\left\vert\{\mbox{Blocks of}\,\, U_\chi(\fg)\}\right\vert\leq \left\vert \Lambda_\chi/W_{\bullet}\right\vert$ and, together with Proposition~\ref{Lower}, this gives the desired result.
\end{proof}

\begin{cor}\label{BlockG23}
	Suppose $G$ is the almost-simple simply-connected algebraic group of type $G_2$ over an algebraically closed field $\bK$ of characteristic $3$. If $\chi\in\fg^{*}$ satisfies $\chi(\fb)=0$, then $U_\chi(\fg)$ has exactly $\left\vert\Lambda_\chi/W_\bullet\right\vert=3$ blocks.
\end{cor}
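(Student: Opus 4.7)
The plan is to apply Proposition~\ref{prop1}. This reduces the task to two ingredients: (a) showing $|\Lambda_\chi/W_\bullet| = 3$, and (b) exhibiting, for each $\lambda \in \Lambda_0$, some $\mu_\lambda \in \fb^\perp$ such that $Z_{\mu_\lambda}(\lambda)$ is an irreducible $U_{\mu_\lambda}(\fg)$-module.

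For (a), since $\chi \in \fb^\perp$ we have $\Lambda_\chi = \Lambda_0 = X(T)/pX(T)$. For $G$ simply-connected of type $G_2$ the character lattice coincides with the root lattice, so $\Lambda_0$ is an $\bF_3$-vector space of dimension $2$ with $9$ elements. The Weyl group of $G_2$ has order $12$, and the dot-action on $\Lambda_0$ can be written explicitly in the basis of simple roots; a direct orbit count then produces exactly $3$ orbits.

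For (b), the plan is to take $\mu_\lambda = \mu$ independent of $\lambda$, where $\mu \in \fb^\perp$ is the regular nilpotent element in standard Levi form. Then $Z_\mu(\lambda)$ is indecomposable (with unique maximal submodule) and has dimension $p^N = 3^6$. The idea is to invoke the Kac--Weisfeiler lower bound: any irreducible $U_\mu(\fg)$-module has dimension divisible by $p^{d(\mu)/2}$, where $d(\mu) = \dim \fg - \dim c_\fg(\mu)$. If $\dim c_\fg(\mu) = 2$, the rank of $G_2$, then $d(\mu)/2 = 6 = N$, so every irreducible $U_\mu(\fg)$-module has dimension at least $p^N$; since $\dim Z_\mu(\lambda) = p^N$, this forces $Z_\mu(\lambda)$ to be irreducible for every $\lambda \in \Lambda_0$.

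The main obstacle is ensuring that this Kac--Weisfeiler dimensional bound is available for $G_2$ in characteristic $3$, a setting in which assumption (C) fails and Premet's theorem does not apply off-the-shelf. This requires either invoking a version of Premet's argument valid without (C), or verifying directly that for the specific regular nilpotent $\mu$ at hand the coadjoint orbit has the expected dimension $\dim \fg - \dim c_\fg(\mu)$; in either case, the crucial input is the centralizer calculation $\dim c_\fg(\mu) = 2$, which is a direct Lie-algebra computation carried out in the Appendix. Once (a) and (b) are established, Proposition~\ref{prop1} immediately yields the corollary.
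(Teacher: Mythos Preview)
Your overall strategy matches the paper's: reduce to Proposition~\ref{prop1}, take $\mu$ to be the regular nilpotent in standard Levi form, and deduce irreducibility of each $Z_\mu(\lambda)$ from a $p^N$-divisibility bound on dimensions of $U_\mu(\fg)$-modules. The orbit count for part~(a) is also in line with the paper's ``one can check directly''.

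The gap is in how you obtain the dimension bound for part~(b). You propose to invoke the Kac--Weisfeiler/Premet bound $p^{d(\mu)/2}\mid\dim L$, with the key input being $\dim c_\fg(\mu)=2$. But the available form of this bound in the paper (Proposition~\ref{nonspec}, due to Premet--Skryabin) requires $p$ to be \emph{non-special} for $\Phi$, and $p=3$ is precisely the special prime for $G_2$; the paper explicitly notes in Subsection~\ref{G23} that Proposition~\ref{nonspec} cannot be applied here. Your two suggested workarounds do not close the gap: the obstruction to Premet's argument is not assumption~(C) but the special prime, and merely computing the coadjoint orbit dimension does not by itself yield the divisibility statement for module dimensions.

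What the paper actually does is bypass the Kac--Weisfeiler bound entirely and use Proposition~\ref{unip} (Premet's freeness criterion) with $\fm=\fn^{-}$. The relevant computation in Subsection~\ref{G23} is not $\dim c_\fg(\mu)=2$ but rather the stronger statement $\fn^{-}\cap c_\fg(\mu)=0$, checked directly from the commutator relations. This forces every finite-dimensional $U_\mu(\fg)$-module to be free over $U_\mu(\fn^{-})$, hence of dimension divisible by $3^{6}=3^{N}$, and irreducibility of $Z_\mu(\lambda)$ follows. So your outline is right in spirit, but the engine driving the dimension bound has to be Proposition~\ref{unip} together with the intersection calculation, not the centralizer-dimension count.
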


\begin{proof}
	The calculations in Subsection~\ref{G23}, {\em infra}, show that, for each $\lambda\in \Lambda_0$, the regular nilpotent $\chi$ in standard Levi form gives an irreducible baby Verma module. The result then follows from Proposition~\ref{prop1} (and one can check directly that $\left\vert\Lambda_\chi/W_\bullet\right\vert=3$).	
\end{proof}

\begin{rmk}
	From the discussion in Section \ref{sec3}, it is sufficient to check the condition of Proposition~\ref{prop1} for representatives $\lambda\in\Lambda_0/{W_\bullet}$.
\end{rmk}

\begin{rmk}
	By Premet's theorem \cite{Pr1.1,Pr3.1}, Proposition~\ref{prop1} gives a proof of Humphreys' conjecture on blocks when Jantzen's standard assumptions hold. This is similar to the proof of Proposition~\ref{BlockNumb}, {\em supra}.
\end{rmk}

Proposition~\ref{prop1} shows that Humphreys' conjecture on blocks holds when irreducible baby Verma modules exist. The next proposition shows what happens when they don't.

\begin{prop}\label{NoIrred}
	Let $\lambda\in \Lambda_0$. If there does not exist $\mu_\lambda\in\fb^{\perp}$ such that $Z_{\mu_\lambda}(\lambda)$ is irreducible, then there exists $1\leq s\leq p^N-1$ such that, for all $\chi\in\fb^{\perp}$,  $Z_\chi(\lambda)$ has an $s$-dimensional submodule.
\end{prop}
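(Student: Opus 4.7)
The plan is to leverage the closedness of each $N_{\lambda,s}$ (recalled just before Proposition~\ref{prop1}) together with the irreducibility of $\fb^\perp$ as an algebraic variety. The hypothesis translates directly into a statement that $S_\lambda$, the open subset of $\fb^\perp$ where $Z_\chi(\lambda)$ is irreducible, is empty.

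First I would recall the identity $S_\lambda=\bigcap_{s=1}^{p^N-1}N_{\lambda,s}^{c}$ from the paragraph preceding Proposition~\ref{prop1}. Taking complements in $\fb^\perp$, the hypothesis $S_\lambda=\emptyset$ becomes
\[
\fb^\perp=\bigcup_{s=1}^{p^N-1}N_{\lambda,s}.
\]
Now $\fb^\perp\cong\bK^N$ is an affine space and hence irreducible in the Zariski topology, so it cannot be written as a finite union of proper closed subsets. Since each $N_{\lambda,s}$ is closed in $\fb^\perp$, there must exist some $s\in\{1,\ldots,p^N-1\}$ with $N_{\lambda,s}=\fb^\perp$. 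By the definition of $N_{\lambda,s}$, this means that for every $\chi\in\fb^\perp$ the baby Verma module $Z_\chi(\lambda)$ admits a $U_\chi(\fg)$-submodule of dimension $s$, which is exactly the desired conclusion.

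There is essentially no obstacle here beyond invoking the two inputs already cited in the text (the closedness of $N_{\lambda,s}$ in $\fb^\perp$ proved in \cite[C.6]{J1.1} and the irreducibility of affine space); the argument is simply a one-line application of the fact that an irreducible variety is not a finite union of proper closed subvarieties.
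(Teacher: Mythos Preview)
Your proof is correct and is essentially the same as the paper's: both start from $S_\lambda=\bigcap_{s=1}^{p^N-1}N_{\lambda,s}^c=\emptyset$ and use the irreducibility of $\fb^\perp\cong\bK^N$, the only cosmetic difference being that the paper phrases it as ``a finite intersection of non-empty open subsets of $\fb^\perp$ is non-empty'' while you take complements and say ``an irreducible variety is not a finite union of proper closed subsets.''
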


\begin{proof}
	If there does not exist $\mu_\lambda\in\fb^{\perp}$ such that $Z_{\mu_\lambda}(\lambda)$ is irreducible then, using the above notation, $$\bigcap_{s=1}^{p^N-1}N_{\lambda,s}^c=\emptyset.$$ Since each $N_{\lambda,s}^c$ is open in $\fb^{\perp}$, and each non-empty open set in $\fb^{\perp}$ is dense, this implies that there exists $1\leq s\leq p^N-1$ such that $N_{\lambda,s}^c=\emptyset$. This implies that $N_{\lambda,s}=\fb^{\perp}$, as required.
\end{proof}

We end by observing an obvious generalisation of the statement that, for $\lambda\in\Lambda_0$, $S_\lambda$ is open dense in $\fb^{\perp}$ whenever there exists $\chi\in\fb^{\perp}$ with $Z_\chi(\lambda)$ irreducible.

\begin{prop}
	Let $\lambda\in \Lambda_0$. Suppose that there exists $\chi_\lambda\in\fb^{\perp}$ and $0\leq k\leq N$ such that every submodule of $Z_{\chi_\lambda}(\lambda)$ has dimension divisible by $p^k$. Then the subset $$V_\lambda\coloneqq\{\mu\in\fb^{\perp}\,\vert\,\mbox{Each } U_\mu(\fg)\mbox{-submodule of }\,Z_\mu(\lambda)\,\,\mbox{has dimension divisible by }\, p^{k}\}$$ is a dense open subset of $\fb^{\perp}$.
\end{prop}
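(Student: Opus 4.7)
The plan is to mimic the argument already given for $S_\lambda$, leveraging the key input from \cite[C.6]{J1.1} that each set $N_{\lambda,s}$ is closed in $\fb^\perp$. The new proposition is really just a refinement of the openness-of-$S_\lambda$ statement: instead of forbidding \emph{all} submodule dimensions $1,\ldots,p^N-1$, we only forbid those dimensions $s$ not divisible by $p^k$.

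First I would rewrite $V_\lambda$ set-theoretically. A point $\mu\in\fb^\perp$ lies in $V_\lambda$ exactly when $Z_\mu(\lambda)$ admits no $U_\mu(\fg)$-submodule of dimension $s$ for any $s\in\{1,\ldots,p^N-1\}$ with $p^k\nmid s$. In the notation of the preceding discussion this reads
$$V_\lambda \;=\;\bigcap_{\substack{1\leq s\leq p^N-1\\ p^k\nmid s}} N_{\lambda,s}^{c}.$$
This is a finite intersection indexed by a subset of $\{1,\ldots,p^N-1\}$.

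Next I would deduce openness. Each $N_{\lambda,s}$ is closed in $\fb^\perp$ by the result quoted from \cite[C.6]{J1.1} (whose proof uses neither (B) nor (C)); hence each $N_{\lambda,s}^{c}$ is open, and $V_\lambda$ is open as a finite intersection of open subsets of $\fb^\perp$.

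For density, I would simply note that $V_\lambda$ is non-empty: the hypothesis exhibits $\chi_\lambda\in V_\lambda$, because every submodule of $Z_{\chi_\lambda}(\lambda)$ has dimension divisible by $p^k$, so $\chi_\lambda\notin N_{\lambda,s}$ for any $s$ with $p^k\nmid s$. Since $\fb^\perp\cong\bK^N$ is irreducible in the Zariski topology, every non-empty open subset is dense, completing the proof. There is no real obstacle here; the only thing to check carefully is that the indexing set in the intersection is correctly chosen so that $V_\lambda$ really is the intersection of finitely many $N_{\lambda,s}^c$ and nothing more.
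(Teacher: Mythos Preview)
Your proposal is correct and follows exactly the paper's approach: the paper's proof consists of the single observation that $V_\lambda=\bigcap_{\substack{1\leq s\leq p^N \\ p^k\nmid s}}N_{\lambda,s}^c$, from which openness (via closedness of each $N_{\lambda,s}$) and density (via non-emptiness and irreducibility of $\fb^\perp$) follow just as you describe. The only cosmetic difference is that the paper lets $s$ range up to $p^N$ rather than $p^N-1$, which is immaterial since $p^k\mid p^N$.
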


\begin{proof}
	The result follows easily once we note that $$V_\lambda=\bigcap_{\substack{1\leq s\leq p^N \\ p^k\nmid s}}N_{\lambda,s}^c.$$
\end{proof}
\begin{rmk}
	This proposition therefore allows us to use the results of Appendix~\ref{sec6} to find dense open subsets $V_\lambda$ of $\fb^{\perp}$. These subsets are thus candidates for the sets $\widehat{C}_\lambda$ discussed earlier; all that remains to show is that $V_\lambda\subseteq C_\lambda$ for all $\lambda\in\Lambda_0$. If this were to hold, then the previous discussion would give a proof of Humphreys' conjecture on blocks for such $\fg$.
\end{rmk}

\newpage
\appendix
\section{Divisibility bounds}\label{sec6}

By Proposition~\ref{prop1}, Humphreys' conjecture on blocks holds whenever, for each $\lambda\in\Lambda_0$, there exists $\mu_\lambda\in\fb^\perp$ such that $Z_{\mu_\lambda}(\lambda)$ is an irreducible $U_{\mu_\lambda}(\fg)$-module. The natural choice for such $\mu_\lambda$ is the $\chi\in\fg^{*}$ which is regular nilpotent in standard Levi form. For such $\chi$, one way to try to show that each $Z_\chi(\lambda)$ is irreducible is to show that each $\dim(Z_\chi(\lambda))$ is divisible by $p^N$, where $N=\left\vert\Phi^{+}\right\vert$. This appendix contains some computations to determine some $k\leq N$ such that all $U_\chi(\fg)$-modules have dimension divisible by $p^k$. Unfortunately, except for the case of $G_2$ in characteristic $3$, we do not find $k$ to be equal to $N$ when assumptions (B) or (C) fail. In a few cases, we are even able to show that $p^N$ does {\em not} divide $\dim(Z_\chi(\lambda))$ for some $\lambda\in\Lambda_0$.

In this appendix, we assume $G$ is an almost-simple simply-connected algebraic group over an algebraically closed field $\bK$ of positive characteristic $p>0$, and we write $\Phi$ for its (indecomposable) root system. Specifically, let $G_{\bZ}$ be a split reductive group scheme over $\bZ$ with root data $(X(T), \Phi, \alpha\mapsto\alpha^\vee)$, let $T_\bZ$ be a split maximal torus of $G_\bZ$, and let $\fg_{\bZ}$ be the Lie ring of $G_{\bZ}$. Throughout this appendix, we think of $G$ as being obtained from $G_{\bZ}$ through base change, so $G=(G_\bZ)_{\bK}$, $T=(T_\bZ)_\bK$ and $\fg=\fg_{\bZ}\otimes_{\bZ}\bK$. In particular, the elements $e_\beta$ ($\beta\in \Phi$) and $h_\alpha$ ($\alpha\in\Pi$) form a Chevalley basis of $\fg$.

Under these assumptions, $\fg$ is a simple Lie algebra unless $\Phi$ is of type $A_n$ with $p$ dividing $n+1$; of type $B_n$, $C_n$, $D_n$, $F_4$ or $E_7$ with $p=2$; or of type $E_6$ or $G_2$ with $p=3$ (see, for example, \cite[6.4(b)]{J2.2}). If $\fg$ is simple then there exists a $G$-equivariant isomorphism $\fg\xrightarrow{\sim}\fg^{*}$ coming from the Killing form, so assumption (C) holds. We also note that assumption (A) holds for all such $G$, since $G$ equals its derived subgroup.

We consider here both those $G$ which satisfy assumption (C) and those which don't (i.e. we also consider those $G$ with $\fg$ non-simple). We focus our attention on the exceptional types $E_6$, $E_7$, $E_8$, $F_4$ and $G_2$. We generally assume throughout this appendix that $\chi$ is in standard Levi form with $I=\Pi$, although we don't make that assumption in this preliminary discussion.

When $G$ satisfies assumptions (A), (B) and (C), Premet's theorem \cite{Pr1.2,Pr3.2} (proving the second Kac-Weisfeiler conjecture \cite{KW2.2}) shows that the dimension of each $U_\chi(\fg)$-module is divisible by $p^{\dim (G\cdot\chi)/2}$. We note also that when (A) and (B) hold but (C) does not - i.e. when $\Phi=A_n$ and $p$ divides $n+1$ - Premet's theorem shows the same result for faithful irreducible $U_\chi(\fg)$-modules. When $\chi$ is regular nilpotent and assumption (C) holds, we know that $\dim (G\cdot\chi) /2=N$. Hence, in this situation we have that all irreducible $U_\chi(\fg)$-modules have dimension divisible by $p^{N}$. This means that all baby Verma modules are irreducible, and so all irreducible $U_\chi(\fg)$-modules are baby Verma modules.

Outside of the setting of Premet's theorem, there are other ways to determine powers of $p$ which divide the dimensions of all $U_\chi(\fg)$-modules. Two particular results are relevant here. Both utilize the {\bf centraliser} in $\fg$ of $\chi\in\fg^{*}$, which the reader will recall is defined as $c_{\fg}(\chi)\coloneqq \{x\in\fg\,\vert\,\chi([x,\fg])=0\}.$

The first result comes from Premet and Skryabin \cite{PS.2}, and applies when the prime $p$ is {\bf non-special} for the root system $\Phi$. This means that $p\neq 2$ when $\Phi$ is $B_n$, $C_n$ or $F_4$, and $p\neq 3$ when $\Phi=G_2$ (i.e. $p$ does not divide any non-zero off-diagonal entry of the Cartan matrix).

\begin{prop}\label{nonspec}
	Let $\chi\in\fb^{\perp}$, and let $d(\chi)\coloneqq\frac{1}{2}\codim_{\fg}(c_\fg(\chi))$. If $p$ is non-special for $\Phi$, then every $U_\chi(\fg)$-module has dimension divisible by $p^{d(\chi)}$.
\end{prop}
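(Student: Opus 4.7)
The plan is to construct an adapted polarization at $\chi$ and then pass from the existence of a ``small'' module of dimension $p^{d(\chi)}$ to the divisibility statement for all irreducibles. First I would reduce to irreducible modules: since any finite-dimensional $U_\chi(\fg)$-module admits a composition series, and its dimension is the sum of the dimensions of the composition factors, it suffices to prove $p^{d(\chi)}\mid\dim L$ for every irreducible $L$.

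For the irreducible case, I would look for a restricted Lie subalgebra $\fm\subseteq\fg$ of codimension $d(\chi)$ which is isotropic for the alternating form $B_\chi(x,y)\coloneqq\chi([x,y])$ (so in particular $\chi([\fm,\fm])=0$), together with a linear functional $\lambda\colon\fm\to\bK$ vanishing on $[\fm,\fm]$ and satisfying the restricted compatibility $\lambda(m)^p-\lambda(m^{[p]})=\chi(m)^p$ for all $m\in\fm$. The pair $(\fm,\lambda)$ is a modular analogue of a Vergne polarization for the coadjoint orbit $G\cdot\chi$, and the induced module
\[
M\coloneqq U_\chi(\fg)\otimes_{U_\chi(\fm)}\bK_\lambda
\]
has dimension exactly $p^{d(\chi)}$, which is the minimum value compatible with the desired divisibility.

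With $M$ available, divisibility for every irreducible $L$ would follow from a PI-theoretic argument: $U_\chi(\fg)$ is a finite module over its centre and is generically Azumaya on a dense open subset of $\Spec Z(U_\chi(\fg))$, so the existence of an irreducible of dimension $p^{d(\chi)}$ pins down the PI-degree along each relevant component, and divisibility then propagates to all irreducibles in the same central block via Morita theory. An alternative route, closer in spirit to Premet's original argument, would be to show directly that the support variety $V_\chi(L)\subseteq\fg^{*}$ contains the orbit $G\cdot\chi$, and to extract the divisibility from a dimension count.

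The principal obstacle is the construction of $(\fm,\lambda)$ as an honest restricted Lie subalgebra carrying a compatible restricted character, uniformly as $\chi$ ranges over $\fb^{\perp}$. This is precisely where the non-special hypothesis on $p$ is essential: at the special primes, the short-versus-long-root bracket relations degenerate in ways that obstruct any uniform choice of polarization. In the non-special setting, one can instead build $\fm$ inductively, starting from the centraliser $c_\fg(\chi_s)$ of the semisimple part of $\chi$ (using the Kac--Weisfeiler Jordan decomposition for $p$-characters recalled in Remark~\ref{NilpRed}) and enlarging by a polarization adapted to the nilpotent part $\chi_n$.
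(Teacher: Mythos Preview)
The paper does not give its own proof of this proposition; it is quoted from Premet--Skryabin \cite{PS.2}, with Proposition~\ref{unip} recording the underlying freeness criterion of Premet. Comparing your sketch to that approach, there is a genuine gap in the logic, not merely a difference of method.

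Your plan is to build a polarization $\fm$ of \emph{codimension} $d(\chi)$ carrying a one-dimensional $U_\chi(\fm)$-module $\bK_\lambda$, so that the induced module $M=U_\chi(\fg)\otimes_{U_\chi(\fm)}\bK_\lambda$ has dimension exactly $p^{d(\chi)}$, and then to deduce divisibility for all irreducibles from the existence of $M$. That last inference does not go through. The existence of a module (irreducible or not) of dimension $p^{d(\chi)}$ places no divisibility constraint on the dimensions of other irreducibles. Your PI-degree sketch points the wrong way: the PI-degree bounds irreducible dimensions from \emph{above}, and is determined by the \emph{generic} (maximal) irreducible dimension, not by exhibiting one small module; Morita equivalence on the Azumaya locus again controls only the generic fibres and says nothing about the non-Azumaya points, which is precisely where the question lives. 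Your alternative via support varieties is closer to something real, but as stated it is only a slogan, and support varieties in the usual sense pertain to the restricted case $\chi=0$.

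The Premet/Premet--Skryabin argument runs in the opposite direction. One produces a \emph{small} unipotent restricted subalgebra $\fm\subseteq\fn^{-}$ of \emph{dimension} $d(\chi)$ with $\chi([\fm,\fm])=0$, $\chi(\fm^{[p]})=0$ and $\fm\cap c_\fg(\chi)=0$, and then invokes the freeness criterion (Proposition~\ref{unip}): every finite-dimensional $U_\chi(\fg)$-module is free over $U_\chi(\fm)$, hence has dimension divisible by $\dim U_\chi(\fm)=p^{d(\chi)}$. The non-special hypothesis is used exactly to guarantee that such an $\fm$ of the full dimension $d(\chi)$ can be found inside $\fn^{-}$; at the special primes certain short-root structure constants vanish and this fails. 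So the subalgebra $\fm$ is not a carrier for an induced module, but an algebra that acts freely on every module; that freeness, not the existence of a small induced module, is the mechanism behind the divisibility.
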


The second proposition we use is also due to Premet \cite{J2.2,Pr1.2,Pr2.2}. To apply it, recall that a restricted Lie algebra is called {\bf unipotent} if for all $x\in\fg$ there exists $r>0$ such that $x^{[p^r]}=0$, where $x^{[p^r]}$ denotes the image of $x$ under $r$ applications of $\,^{[p]}$. In particular, this applies to $\fn^{-}$ and any restricted subalgebras of it.

\begin{prop}\label{unip}
	Let $\chi\in\fg^{*}$. If $\fm$ is a unipotent restricted subalgebra of $\fg$ with $\chi([\fm,\fm])=0$, $\chi(\fm^{[p]})=0$ and $\fm\cap c_\fg(\chi)=0$, then every finite-dimensional $U_\chi(\fg)$-module is free over $U_\chi(\fm)$.
\end{prop}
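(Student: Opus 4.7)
The plan is to identify $U_\chi(\fm)$ with a local Frobenius algebra and then prove freeness of any finite-dimensional $U_\chi(\fg)$-module $M$ over it via a dimension count that crucially uses the centraliser hypothesis.

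First, I would verify that $U_\chi(\fm)$ is a local Frobenius algebra of dimension $p^{\dim\fm}$. The Frobenius property is automatic, since finite-dimensional restricted enveloping algebras are Hopf algebras. For locality, I would observe that the two hypotheses $\chi([\fm,\fm])=0$ and $\chi(\fm^{[p]})=0$ are precisely what is required for the ``shift'' map $x\mapsto x-\chi(x)\cdot 1$ to induce an algebra isomorphism $U_0(\fm)\xrightarrow{\sim}U_\chi(\fm)$: the first hypothesis guarantees that the bracket relations survive the shift, while the second forces the shifted $[p]$-relation $(x-\chi(x))^p=x^{[p]}-\chi(x^{[p]})$ to hold in $U_\chi(\fm)$ (using that $x^p-x^{[p]}=\chi(x)^p$ there). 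Since $\fm$ is unipotent, $U_0(\fm)$ has the trivial module as its unique simple, so transporting across this isomorphism gives $\bK_\chi$ as the unique simple $U_\chi(\fm)$-module, making $U_\chi(\fm)$ local.

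Over a local finite-dimensional algebra $A$, a finite-dimensional module $N$ is free if and only if $\dim N=\dim A\cdot\dim(N/\mathrm{rad}(A)N)$, since by Nakayama the surjection from a projective cover $A^n\twoheadrightarrow N$ is an isomorphism precisely when this dimension equality holds. It therefore suffices to prove, for each finite-dimensional $U_\chi(\fg)$-module $M$, the identity
\[
\dim M=p^{\dim\fm}\cdot\dim M_{\fm,\chi},\qquad M_{\fm,\chi}:=M\big/\big\langle(x-\chi(x))m\mid x\in\fm,\,m\in M\big\rangle.
\]

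The main obstacle is this dimension identity, and it is precisely here that $\fm\cap c_\fg(\chi)=0$ enters in an essential way. The hypothesis says that the alternating form $\omega_\chi(x,y):=\chi([x,y])$ on $\fg$ pairs $\fm$ non-degenerately with some complement of $c_\fg(\chi)$, so I can pick $y_1,\dots,y_r\in\fg$ (with $r=\dim\fm$) forming a basis dual to a chosen basis $x_1,\dots,x_r$ of $\fm$ under $\omega_\chi$. Following Premet's strategy, I would then equip $U_\chi(\fg)$ and $M$ with a suitable (Kazhdan-type) filtration adapted to this dual basis; at the associated graded level the $y_i$ become commuting ``ladder'' operators that exhibit $\gr M$ as a free $\gr U_\chi(\fm)$-module of rank $\dim M_{\fm,\chi}$, whence $\dim M=p^{\dim\fm}\cdot\dim M_{\fm,\chi}$ follows. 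Transferring this freeness from the associated graded back to the ungraded setting, while carefully tracking compatibility with the $[p]$-relations and with the defining relations of $U_\chi(\fg)$, is the technically most delicate step.
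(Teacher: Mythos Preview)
The paper does not prove this proposition; it is stated with attribution to Premet (via references to Jantzen's survey and Premet's own papers) and then used as a black box in the appendix. There is therefore no proof in the paper against which to compare your attempt.

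Your outline is nonetheless in the right spirit. The shift isomorphism $U_0(\fm)\xrightarrow{\sim}U_\chi(\fm)$ is correct and is the standard opening move; one small quibble is that $U_\chi(\fm)$ is not itself a Hopf algebra when $\chi\vert_{\fm}\neq 0$, so the Frobenius property is better justified either via this isomorphism or by the general fact that all reduced enveloping algebras are Frobenius. Your local-algebra freeness criterion is also correct.

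Your final paragraph, however, is both vague and not what one finds in the cited sources. The arguments of Premet (and Jantzen's exposition of them) do not introduce a Kazhdan-type filtration on $U_\chi(\fg)$; they proceed more directly, essentially by an induction on $\dim\fm$ that reduces to the one-dimensional case. There, for $x\in\fm$ central in $\fm$ with $x^{[p]}=0$, the hypothesis $\fm\cap c_\fg(\chi)=0$ supplies $y\in\fg$ with $\chi([x,y])\neq 0$, and an explicit calculation with this $y$ forces every Jordan block of $x-\chi(x)$ acting on $M$ to have size exactly $p$, so $M$ is free over $U_\chi(\bK x)$. Your filtration idea is only a gesture---you yourself flag that lifting freeness from the associated graded is ``the technically most delicate step'' but give no indication of how to carry it out---and it is not clear this route can be completed without essentially reinventing the more elementary inductive argument.
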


In applying the second proposition when $\chi\in\fb^{\perp}$, the reader should note the following. Suppose $\fm$, a $\bK$-subspace of $\fg$, has a basis consisting of elements $e_{-\alpha}$ for $\alpha\in\Psi$, where $\Psi$ is some subset of $\Phi^{+}$. The condition $\chi([\fm,\fm])=0$ is clearly satisfied if $\chi(e_{-\alpha-\beta})=0$ for all $\alpha,\beta\in\Psi$. Furthermore, we have in $U(\fg)$ that $$\left(\sum_{\alpha\in\Psi}c_\alpha e_{-\alpha}\right)^{p}-\left(\sum_{\alpha\in\Psi}c_\alpha e_{-\alpha}\right)^{[p]}=\sum_{\alpha\in\Psi}c_\alpha^p (e_{-\alpha}^p-e_{-\alpha}^{[p]})=\sum_{\alpha\in\Psi}c_\alpha^p e_{-\alpha}^p$$ by the semilinearity of the map $x\mapsto x^p-x^{[p]}$, and we have $$\left(\sum_{\alpha\in\Psi}c_\alpha e_{-\alpha}\right)^{p}\in \sum_{\alpha\in\Psi}c_\alpha^p e_{-\alpha}^p + \sum_{\gamma_1,\ldots,\gamma_p\in\Psi}\bK e_{-\gamma_1-\gamma_2-\cdots-\gamma_p}$$ where we interpret $e_{-\gamma_1-\gamma_2-\cdots-\gamma_p}=0$ if $-\gamma_1-\gamma_2-\cdots-\gamma_p\notin \Phi$. We hence conclude that $$\left(\sum_{\alpha\in\Psi}c_\alpha e_{-\alpha}\right)^{[p]}\in \sum_{\gamma_1,\ldots,\gamma_p\in\Psi}\bK e_{-\gamma_1-\gamma_2-\cdots-\gamma_p}.$$ In particular, if $\chi(e_{-\gamma_1-\gamma_2-\cdots-\gamma_p})=0$ for all $\gamma_1,\ldots,\gamma_p\in\Psi$, we find that $\chi(\fm^{[p]})=0$. Furthermore, if $\Psi$ satisfies the condition that $\alpha,\beta\in\Psi$, $\alpha+\beta\in\Phi$ implies $\alpha+\beta\in\Psi$ (we call this the condition of $\Psi$ being {\bf closed}), then it is enough to check that $\chi(e_{-\alpha-\beta})=0$ for all $\alpha,\beta\in\Psi$. Finally, we observe that $\Psi$ being closed is enough to show that $\fm$ is a subalgebra. So we may obtain a corollary to Proposition~\ref{unip}:

\begin{cor}
	Let $\chi\in\fb^{\perp}$ and let $\Psi$ be a closed subset of $\Phi^{+}$. Suppose that $\chi(e_{-\alpha-\beta})=0$ for all $\alpha,\beta\in\Psi$. Furthermore, let $\fm$ be the subspace of $\fg$ with basis consisting of the $e_{-\alpha}$ with $\alpha\in\Psi$, and suppose that $\fm\cap c_\fg(\chi)=0$. Then every finite-dimensional $U_\chi(\fg)$-module has dimension divisible by $p^{\vert\Psi\vert}$.
\end{cor}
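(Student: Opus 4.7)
The plan is to apply Proposition~\ref{unip} to the subspace $\fm$. The bulk of the argument has already been sketched in the paragraphs preceding the statement, so the proof reduces to assembling those observations into a verification of the three hypotheses of that proposition (the fourth, $\fm\cap c_\fg(\chi)=0$, is assumed outright).

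First, I would verify that $\fm$ is a restricted Lie subalgebra of $\fg$. Since $\Psi$ is closed, for $\alpha,\beta\in\Psi$ the bracket $[e_{-\alpha},e_{-\beta}]$ is a scalar multiple of $e_{-(\alpha+\beta)}$ (or is zero if $\alpha+\beta\notin\Phi$), and hence lies in $\fm$, so $\fm$ is a Lie subalgebra. For the $p$-power map, the displayed computation in the paragraph preceding the corollary shows that $\bigl(\sum_{\alpha\in\Psi}c_\alpha e_{-\alpha}\bigr)^{[p]}$ lies in the span of the elements $e_{-\gamma_1-\cdots-\gamma_p}$ with $\gamma_i\in\Psi$. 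Applying closedness of $\Psi$ inductively, any such sum $\gamma_1+\cdots+\gamma_p$ that is actually a root lies in $\Psi$, so this $p$-th power lies in $\fm$. Unipotence of $\fm$ is then automatic, since $\fm\subseteq\fn^{-}$ and $\fn^{-}$ is a unipotent restricted subalgebra of $\fg$.

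Next, I would verify the two vanishing conditions. The condition $\chi([\fm,\fm])=0$ reduces by bilinearity to showing $\chi([e_{-\alpha},e_{-\beta}])=0$ for all $\alpha,\beta\in\Psi$; since $[e_{-\alpha},e_{-\beta}]\in\bK\cdot e_{-(\alpha+\beta)}$, the hypothesis $\chi(e_{-\alpha-\beta})=0$ gives exactly this. For $\chi(\fm^{[p]})=0$, by the same displayed computation it suffices to check that $\chi(e_{-\gamma_1-\cdots-\gamma_p})=0$ whenever $\gamma_i\in\Psi$ and $\gamma_1+\cdots+\gamma_p\in\Phi$. Here is where closedness of $\Psi$ does real work: inductively, $\delta\coloneqq\gamma_1+\cdots+\gamma_{p-1}$ either is not a root (in which case the corresponding term vanishes) or lies in $\Psi$, and then $\gamma_1+\cdots+\gamma_p=\delta+\gamma_p$ with $\delta,\gamma_p\in\Psi$, so the hypothesis on $\chi$ applies directly.

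With all hypotheses of Proposition~\ref{unip} verified, every finite-dimensional $U_\chi(\fg)$-module is free over $U_\chi(\fm)$; since $\dim U_\chi(\fm)=p^{\dim\fm}=p^{|\Psi|}$, the dimension of any such module is divisible by $p^{|\Psi|}$, as required. There is no serious obstacle; the only mildly delicate point is the inductive use of closedness of $\Psi$ to reduce $p$-fold sums of elements of $\Psi$ to pairwise sums already controlled by the hypothesis on $\chi$.
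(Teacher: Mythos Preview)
Your proposal is correct and follows exactly the approach of the paper: the corollary is stated immediately after the discussion that verifies the hypotheses of Proposition~\ref{unip}, and your proof simply assembles that discussion. One minor imprecision: in your inductive step for $\chi(\fm^{[p]})=0$, the claim that ``the corresponding term vanishes'' when $\delta=\gamma_1+\cdots+\gamma_{p-1}$ is not a root is not quite right, since $\gamma_1+\cdots+\gamma_p$ can still be a root even when this particular partial sum is not. The cleanest fix (which also sharpens the paper's own phrasing) is to observe directly that once $\Psi$ is closed, $\fm$ is a Lie subalgebra with $e_{-\alpha}^{[p]}=0$ for each basis vector, so Jacobson's formula gives $\fm^{[p]}\subseteq[\fm,\fm]$, and then $\chi(\fm^{[p]})=0$ follows immediately from $\chi([\fm,\fm])=0$.
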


The above discussion actually shows that this corollary can be improved a bit. Given two roots $\alpha,\beta\in\Phi$, write $C_{\alpha,\beta}\coloneqq q+1$ where $q\in\bN$ is maximal for the condition that $\beta-q\alpha$ lies in $\Phi$ (so, in particular, $[e_{\alpha},e_{\beta}]=\pm C_{\alpha,\beta} e_{\alpha+\beta}$ if $\alpha+\beta\in\Phi$). Let us say that $\Psi$ is {\bf $p$-closed} if, for all $\alpha,\beta\in\Psi$ with $\alpha+\beta\in \Phi$, either $\alpha+\beta\in \Psi$ or $p$ divides $C_{\gamma,\delta}$ for all $\gamma, \delta\in\Psi$ with $\gamma+\delta=\alpha+\beta$. Then we easily obtain the following.

\begin{cor}\label{pclos}
	Let $\chi\in\fb^{\perp}$, and let $\Psi$ be a $p$-closed subset of $\Phi^{+}$. Suppose that $\chi(e_{-\alpha-\beta})=0$ for all $\alpha,\beta\in\Psi$ with $\alpha+\beta\in\Psi$. Furthermore, let $\fm$ be the subspace of $\fg$ with basis consisting of the $e_{-\alpha}$ with $\alpha\in\Psi$, and suppose that $\fm\cap c_\fg(\chi)=0$. Then every finite-dimensional $U_\chi(\fg)$-module has dimension divisible by $p^{\vert\Psi\vert}$.
\end{cor}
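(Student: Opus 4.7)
The plan is to reduce directly to Proposition~\ref{unip}, by verifying its four hypotheses: that $\fm$ is a unipotent restricted subalgebra of $\fg$, that $\chi([\fm,\fm])=0$, that $\chi(\fm^{[p]})=0$, and that $\fm\cap c_\fg(\chi)=0$. The last of these is assumed, and unipotence is automatic from $\fm\subseteq\fn^{-}$ (each root vector $e_{-\alpha}$ satisfies $e_{-\alpha}^{[p]}=0$, and $\fn^{-}$ is a nilpotent graded Lie algebra, so sufficiently many $[p]$-iterations kill any element). The remaining three conditions must be extracted from $p$-closedness and the vanishing hypothesis on $\chi$.

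First I would check that $\fm$ is closed under brackets. For $\alpha,\beta\in\Psi$, the bracket $[e_{-\alpha},e_{-\beta}]$ is zero unless $\alpha+\beta\in\Phi$, in which case it equals $\pm C_{-\alpha,-\beta}\,e_{-\alpha-\beta}$. When $\alpha+\beta\in\Psi$ this already lies in $\fm$; when $\alpha+\beta\in\Phi\setminus\Psi$, the $p$-closedness hypothesis (applied to the pair $\gamma=\alpha,\delta=\beta$) forces $p\mid C_{\alpha,\beta}=C_{-\alpha,-\beta}$, so the bracket vanishes in $\bK$. The same analysis gives $\chi([\fm,\fm])=0$: nonzero brackets land in $\bK\,e_{-\alpha-\beta}$ with $\alpha+\beta\in\Psi$, and the hypothesis on $\chi$ kills these.

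The main obstacle is checking $\fm^{[p]}\subseteq\fm$ and $\chi(\fm^{[p]})=0$ simultaneously. As recalled in the paragraph preceding the corollary, for $x=\sum_{\alpha\in\Psi}c_\alpha e_{-\alpha}$ one has $x^{[p]}\in\sum_{\gamma_1,\ldots,\gamma_p\in\Psi}\bK\,e_{-\gamma_1-\cdots-\gamma_p}$, where the coefficient of each $e_{-\gamma_1-\cdots-\gamma_p}$ is a sum of products of Chevalley constants arising from iterated commutators of the $e_{-\gamma_i}$. For any such iterated commutator to be nonzero, every partial sum along the bracketing chain must itself lie in $\Phi$. I claim that if the total sum $\delta=\gamma_1+\cdots+\gamma_p$ lies in $\Phi\setminus\Psi$, then in fact every term contributing to its coefficient vanishes in characteristic $p$: walking along the chain of partial sums (each of which is a sum of elements of $\Psi$), there is a first step at which we go from an element $\alpha\in\Psi$ to $\alpha+\gamma_{i_k}\in\Phi\setminus\Psi$; at this step $p$-closedness applied to $\alpha,\gamma_{i_k}\in\Psi$ forces $p\mid C_{\alpha,\gamma_{i_k}}$, killing the whole product. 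Hence $x^{[p]}\in\fm$, so $\fm$ is restricted. Conversely, the surviving terms are $e_{-\delta}$ with $\delta\in\Psi$ arising as $\delta=\gamma_1+(\gamma_2+\cdots+\gamma_p)$, where both summands lie in $\Psi$ (by the same chain argument applied to the subchain $\gamma_p,\gamma_{p-1}+\gamma_p,\ldots,\gamma_2+\cdots+\gamma_p$), so the hypothesis on $\chi$ applies and gives $\chi(e_{-\delta})=0$.

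With all hypotheses of Proposition~\ref{unip} verified, every finite-dimensional $U_\chi(\fg)$-module is free over $U_\chi(\fm)$, and since $\dim U_\chi(\fm)=p^{\dim\fm}=p^{|\Psi|}$, divisibility by $p^{|\Psi|}$ follows.
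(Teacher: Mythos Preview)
Your proof is correct and follows the same route as the paper: reduce to Proposition~\ref{unip} by checking that $p$-closedness makes the relevant brackets vanish in characteristic $p$, so that the argument written out for closed $\Psi$ goes through verbatim. The paper leaves this as ``we easily obtain the following,'' and your write-up supplies the details.

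One small simplification worth noting: once you have shown that $\fm$ is a Lie subalgebra and that $\chi([\fm,\fm])=0$, the chain argument for $\fm^{[p]}\subseteq\fm$ and $\chi(\fm^{[p]})=0$ becomes unnecessary. Jacobson's formula expresses $x^{[p]}-\sum_\alpha c_\alpha^p e_{-\alpha}^{[p]}$ as a sum of $p$-fold iterated brackets of the $e_{-\alpha}$; since each $e_{-\alpha}^{[p]}=0$ and $\fm$ is a subalgebra, this forces $x^{[p]}\in[\fm,\fm]$, whence both $\fm^{[p]}\subseteq\fm$ and $\chi(\fm^{[p]})=0$ follow at once. Your chain argument is not wrong, but it implicitly treats only left-normed brackets; the subalgebra observation handles all bracket patterns uniformly and is shorter.
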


Let us consider a bit further the condition that $\fm\cap c_\fg(\chi)=0$. Let $x\in \fm\cap c_\fg(\chi)$. We can then write $$x=\sum_{\alpha\in\Psi}c_\alpha e_{-\alpha}.$$ The fact that $x\in c_\fg(\chi)$ means that $\chi([x,\fg])=0$. This is equivalent to the requirement that $\chi([x,e_\beta])=0$ for all $\beta\in\Phi$ and $\chi([x,h])=0$ for all $h\in\fh$.
Let $\Delta$ be the subset of $\Phi^{-}$ such that $\chi(e_{\alpha})\neq 0$ for $\alpha\in \Delta$. We then have, for $\beta\in\Phi$, that $$0=\chi([x,e_\beta])=\sum_{\substack{\gamma\in\Psi \\ \beta-\gamma\in\Delta}}c_{\gamma}\chi([e_{-\gamma},e_\beta])$$ and, for $h\in\fh$, that $$0=\chi([x,h])=\sum_{\gamma\in\Delta}c_\gamma\gamma(h)\chi(e_{-\gamma}).$$ Showing that $\fm\cap c_\fg(\chi)=0$ then involves showing that there is no non-zero solution to these equations in $c_\gamma$.
	\begin{table}
	\label{tab1}
	\begin{center}
		\bgroup
		\renewcommand{\arraystretch}{2}
		\caption{Dimensions of centralisers of regular nilpotent elements and $p$-characters}
		\begin{tabular}{|| p{2em} | p{2em} | p{4em} | p{4em} ||}
			\hline
			$G$ & $p$ & $\dim\fc_\fg(e)$ & $\dim\fc_\fg(\chi)$  \\ [0.5ex]
			\hline\hline
			$E_6$ & 2 & 8 & 8 \\ [0.5ex]
			\hline
			$E_6$ & 3* & 9 & 10 \\ [0.5ex]
			\hline
			$E_7$ & 2* & 14 & 15 \\ [0.5ex]
			\hline
			$E_7$ & 3 & 9 & 9 \\ [0.5ex]
			\hline
			$E_8$ & 2 & 16 & 16 \\ [0.5ex]
			\hline
			$E_8$ & 3 & 12 &  12 \\ [0.5ex]
			\hline
			$E_8$ & 5 & 10 & 10\\ [0.5ex]
			\hline
			$F_4$ & 2* & 8 & 6  \\ [0.5ex]
			\hline
			$F_4$ & 3 & 6 & 6 \\ [0.5ex]
			\hline
			$G_2$ & 2 & 4 & 4 \\ [0.5ex]
			\hline
			$G_2$ & 3* & 3 & 2 \\ [0.5ex]
			\hline
		\end{tabular}
		\egroup
	\end{center}
\end{table}

We now turn to the application of these propositions. In each case, we take $\chi$ to be regular nilpotent in standard Levi form and we apply one of the propositions or its corollaries to determine a divisibility bound for the dimensions of $U_\chi(\fg)$-modules. We do this for $\Phi$ of exceptional type. Principally, we compute the centraliser $\fc_\fg(\chi)$ and use its description to determine the bound. For $\Phi=G_2$ we give the explicit computations, but for the larger rank examples the results were obtained using Sage \cite{S.2}. Because of this, when there is a choice we take the structure coefficients to be as used in the Sage class {\fontfamily{pcr}\selectfont LieAlgebraChevalleyBasis\_with\_category}. However, we use the labelling of the simple roots as given in \cite{Sp.2}.

\begin{rmk}
	Our computations of $\dim\fc_\fg(\chi)$ can be compared with the computations of $\dim\fc_\fg(e)$ for $e=\sum_{\alpha\in\Pi}e_\alpha$ which can be deduced from \cite[Cor. 2.5, Thm. 2.6]{Sp.2}. The results are listed in Table~\ref{tab1}. 
	
	 When $\fg$ is simple, $\chi$ and $e$ are identified through the $G$-equivariant isomorphism $\fg\xrightarrow{\sim}\fg^{*}$, and thus $\fc_\fg(\chi)=\fc_\fg(e)$. In the subsections below, we nonetheless include calculations of $\fc_\fg(\chi)$ for the bad primes for which $\fg$ is simple, since we give explicit bases for the centralisers in these case and in some instances we use such bases to show the reducibility of the corresponding baby Verma modules.
	 
	 In the other cases (which we label with an asterisk (*) in Table~\ref{ta1}), however, we find that the dimensions of $\fc_\fg(e)$ and $\fc_\fg(\chi)$ differ from each other. Note also that we give in Table~\ref{ta1} the dimension of $\fc_\fg(\chi)$ for $G_2$ in characteristic 3, even though we do not give it in Subsection~\ref{G23} below, because it is easy to compute.
\end{rmk}

\begin{rmk}
	In our discussion of $\fg$ so far, the Lie algebra $\fg$ of $G$ has been obtained as $\fg=\fg_\bZ\otimes_\bZ \bK$, where $\fg_\bZ$ is a $\bZ$-form of the complex simple Lie algebra $\fg_\bC$. In particular, $\fg_{\bZ}$ is the $\bZ$-form coming from the chosen Chevalley basis of $\fg_\bC$, which is what gives our Chevalley basis of $\fg$. We may then also define $\fg_{\bF_p}=\fg_\bZ\otimes_{\bZ} \bF_p$, so that $\fg=\fg_{\bF_p}\otimes_{\bF_p}\bK$. Therefore, if $\chi_{\bF_p}:\fg_{\bF_p}\to\bF_p$ is a linear form, we may define $\chi:\fg\to\bK$ by linear extension. It is clear that any $\chi$ in standard Levi form may be obtained in this way. Our calculations in Sage are calculations with $\fg_{\bF_p}$ and $\chi_{\bF_p}$ rather than $\fg$. However, when $\chi$ is obtained through scalar extension from an $\bF_p$-linear form, the above discussion shows that determining the elements of $\fg$ which lie in $\fc_\fg(\chi)$ comes down to finding solutions to certain linear equations with coefficients in $\bF_p$. This in particular shows that $\fc_{\fg_{\bF_p}}(\chi_{\bF_p})\otimes_{\bF_p}\bK=\fc_{\fg}(\chi)$, so our calculations over $\bF_p$ also lead to the results over $\bK$.
\end{rmk}

\subsection{$G_2$ in characteristic 2}\label{G22}

Suppose $\Phi=G_2$ and $p=2$. Since $p$ is non-special in this case, we may apply Proposition~\ref{nonspec}. Let us therefore compute $c_\fg(\chi)$. Set $x\in\fg$ be written as $x=\sum_{\gamma\in\Phi}c_\gamma e_\gamma + \sum_{\gamma\in\Pi}d_\gamma h_\gamma$, with the $c_\gamma$, $d_\gamma$ lying in $\bK$. Then the relations required for $x\in c_{\fg}(\chi)$ are as follows:

\begin{itemize}
	\item $0=\chi([x,e_{3\alpha+2\beta}])=0$,
	\item $0=\chi([x,e_{3\alpha+\beta}])=c_{-3\alpha-2\beta}\chi([e_{-3\alpha-2\beta},e_{3\alpha+\beta}])=c_{-3\alpha-2\beta}\chi(e_{-\beta})= c_{-3\alpha-2\beta}$,
	\item $0=\chi([x,e_{2\alpha+\beta}])=c_{-3\alpha-\beta}\chi([e_{-3\alpha-\beta},e_{2\alpha+\beta}])=c_{-3\alpha-\beta}\chi(e_{-\alpha})= c_{-3\alpha-\beta},$
	\item $0=\chi([x,e_{\alpha+\beta}])=c_{-2\alpha-\beta}\chi([e_{-2\alpha-\beta},e_{\alpha+\beta}])=c_{-2\alpha-\beta}\chi(2e_{-\alpha})=0$,
	\item $0=\chi([x,e_{\beta}])=c_{-\alpha-\beta}\chi([e_{-\alpha-\beta},e_{\beta}])=c_{-\alpha-\beta}\chi(e_{-\alpha})= c_{-\alpha-\beta}$,
	\item $0=\chi([x,e_{\alpha}])=c_{-\alpha-\beta}\chi([e_{-\alpha-\beta},e_{\alpha}])=c_{-\alpha-\beta}\chi(-3e_{-\alpha})=-3c_{-\alpha-\beta}=c_{-\alpha-\beta}$,
	\item $0=\chi([x,h_\alpha])=c_{-\alpha}\chi(\alpha(h_\alpha)e_{-\alpha}) + c_{-\beta}\chi(\beta(h_\alpha)e_{-\beta})=2c_{-\alpha}-3c_{-\beta}=c_{-\beta}$,
	\item $0=\chi([x,h_\beta])=c_{-\alpha}\chi(\alpha(h_\beta)e_{-\alpha}) + c_{-\beta}\chi(\beta(h_\beta)e_{-\beta})=-c_{-\alpha} +2c_{-\beta}=c_{-\alpha}$,
	\item $0=\chi([x,e_{-\alpha}])=d_{\alpha}\chi([h_{\alpha},e_{-\alpha}]) + d_{\beta}\chi([h_{\beta},e_{-\alpha}])=d_{\alpha}\chi(-\alpha(h_\alpha)e_{-\alpha}) + d_{\beta}\chi(-\alpha(h_{\beta})e_{-\alpha})=-2d_{\alpha}+d_{\beta}=d_{\beta}$,
	\item $0=\chi([x,e_{-\beta}])=d_{\alpha}\chi([h_{\alpha},e_{-\beta}]) + d_{\beta}\chi([h_{\beta},e_{-\beta}])=d_{\alpha}\chi(-\beta(h_\alpha)e_{-\beta}) + d_{\beta}\chi(-\beta(h_{\beta})e_{-\beta})=3d_{\alpha}-2d_{\beta}=d_{\alpha}$,
	\item $0=\chi([x,e_{-\alpha-\beta}])=c_{\alpha}\chi([e_{\alpha},e_{-\alpha-\beta}]) + c_{\beta}\chi([e_{\beta},e_{-\alpha-\beta}])= 3c_{\alpha}\chi(e_{-\beta})- c_{\beta}\chi(e_{-\alpha})=c_{\alpha}+c_{\beta}$,
	\item $0=\chi([x,e_{-2\alpha-\beta}])=c_{\alpha+\beta}\chi([e_{\alpha+\beta},e_{-2\alpha-\beta}])=- 2c_{\alpha+\beta}\chi(e_{-\alpha})=0$,
	\item $0=\chi([x,e_{-3\alpha-\beta}])=c_{2\alpha+\beta}\chi([e_{2\alpha+\beta},e_{-3\alpha-\beta}])=- c_{2\alpha+\beta}\chi(e_{-\alpha})= c_{2\alpha+\beta}$,
	\item $0=\chi([x,e_{-3\alpha-2\beta}])=c_{3\alpha+\beta}\chi([e_{3\alpha+\beta},e_{-3\alpha-2\beta}])=- c_{3\alpha+\beta}\chi(e_{-\alpha})= c_{3\alpha+\beta}$.
\end{itemize}

We therefore conclude that $$c_{\fg}(\chi)=\{ae_{-2\alpha-\beta} + b(e_{\alpha}+e_{\beta}) + ce_{\alpha+\beta} + de_{3\alpha+2\beta}\,\vert\,a,b,c,d\in\bK\}$$ and so is 4-dimensional. Hence, $d(\chi)=\frac{1}{2}(14-4)=5$, and so by Proposition~\ref{nonspec} we conclude that every finite-dimensional $U_\chi(\fg)$-module has dimension divisible by $2^5$.

We furthermore note that a $U_\chi(\fg)$-module of dimension $2^5$ does indeed exist in this case. Let $\lambda\in \Lambda_0$ be such that $\lambda(h_\beta)=0$, and let us write $\omega_1\in \Lambda_0$ for the map with $\omega_1(h_\alpha)=1$ and $\omega_1(h_\beta)=0$. We may then define a $U_\chi(\fg)$-module homomorphism $$Z_\chi(\lambda)\to Z_\chi(\lambda-\omega_1),\qquad v_{\lambda}\mapsto e_{-2\alpha-\beta}v_{\lambda-\omega_1}.$$
This has a kernel of dimension $2^5$ and so both the kernel and image of this homomorphism are $U_\chi(\fg)$-modules of dimension $2^5$. 

\subsection{$G_2$ in characteristic 3}\label{G23}

Suppose $\Phi=G_2$ and $p=3$. Note that this Lie algebra is not simple, since it has an ideal generated by the short roots. In this case $p$ is not non-special for $\Phi$ so we cannot apply Proposition~\ref{nonspec}. Instead, we want to apply Proposition~\ref{unip}, and so we need to find an appropriate $\fm$. Take $\fm=\fn^{-}$. In this case, $\Psi=\Phi^{+}$ is closed and $\chi(e_{-\gamma-\delta})=0$ for all $\gamma,\delta\in \Psi$. In the notation of the previous discussion, we have $\Delta=\{-\alpha,-\beta\}$.

Let $x=\sum_{\gamma\in\Phi^{+}} c_\alpha e_{-\alpha}$. 
Then the relations required for $x\in c_{\fg}(\chi)$ are as follows:
\begin{itemize}
	\item $0=\chi([x,e_{3\alpha+2\beta}])=0$,
	\item $0=\chi([x,e_{3\alpha+\beta}])=c_{3\alpha+2\beta}\chi([e_{-3\alpha-2\beta},e_{3\alpha+\beta}])=c_{3\alpha+2\beta}\chi(e_{-\beta})= c_{3\alpha+2\beta}$,
	\item $0=\chi([x,e_{2\alpha+\beta}])=c_{3\alpha+\beta}\chi([e_{-3\alpha-\beta},e_{2\alpha+\beta}])=c_{3\alpha+\beta}\chi(e_{-\alpha})= c_{3\alpha+\beta}$,
	\item $0=\chi([x,e_{\alpha+\beta}])=c_{2\alpha+\beta}\chi([e_{-2\alpha-\beta},e_{\alpha+\beta}])=c_{2\alpha+\beta}\chi(2e_{-\alpha})= 2c_{2\alpha+\beta}$,
	\item $0=\chi([x,e_{\beta}])=c_{\alpha+\beta}\chi([e_{-\alpha-\beta},e_{\beta}])=c_{\alpha+\beta}\chi(e_{-\alpha})=c_{\alpha+\beta}$,
	\item $0=\chi([x,e_{\alpha}])=c_{\alpha+\beta}\chi([e_{-\alpha-\beta},e_{\alpha}])=c_{\alpha+\beta}\chi(-3e_{-\alpha})=0$,
	\item $0=\chi([x,h_\alpha])=c_{\alpha}\chi(\alpha(h_\alpha)e_{-\alpha}) + c_{\beta}\chi(\beta(h_\alpha)e_{-\beta})=2c_\alpha -3c_\beta=2c_\alpha$,
	\item $0=\chi([x,h_\beta])=c_{\alpha}\chi(\alpha(h_\beta)e_{-\alpha}) + c_{\beta}\chi(\beta(h_\beta)e_{-\beta})=-c_\alpha +2c_\beta$.
\end{itemize}

It is easy to see that these relations force $x=0$, so $\fm\cap c_{\fg}(\chi)=0$. Hence, Proposition~\ref{unip} shows that every finite-dimensional $U_\chi(\fg)$-module has dimension divisible by $3^6$, which is $3^{\dim\fn^{-}}$. So in this case each baby Verma module $Z_\chi(\lambda)$ is irreducible.

\subsection{$F_4$ in characteristic 2}\label{F42}

Set $\Phi=F_4$ and $p=2$. Since $p$ is not non-special in this case we need to use Proposition~\ref{unip}; in fact, we use Corollary~\ref{pclos}. Set $\fm$ to be the subspace of $\fn^{-}$ with basis given by the elements $e_{-\alpha}$ for $\alpha\in\Psi\coloneqq\Phi^{+}\setminus\{\alpha_2+ 2\alpha_3\}$. It is straightforward to see that $\Psi$ is 2-closed. We want to see that $\fm\cap\fc_\fg(\chi)=0$. We do this by giving a basis of $\fc_\fg(\chi)$ as follows:

\begin{enumerate}
	\item $e_{-\alpha_2-\alpha_3-\alpha_4} + e_{-\alpha_2-2\alpha_3}$;
	\item $e_{\alpha_3} +e_{\alpha_4}$;
	\item $e_{\alpha_3+\alpha_4}$;
	\item $e_{\alpha_1+\alpha_2+2\alpha_3+\alpha_4} + e_{\alpha_2+2\alpha_3+2\alpha_4}$;
	\item $e_{\alpha_1+2\alpha_2+3\alpha_3+\alpha_4}+e_{\alpha_1+2\alpha_2+2\alpha_3+2\alpha_4}$;
	\item $e_{2\alpha_1+3\alpha_2+4\alpha_3+2\alpha_4}$.
\end{enumerate}

It is clear from this basis description that $\fc_\fg(\chi)\cap\fm=0$. Hence, Corollary~\ref{pclos} applies and we get that every finite-dimensional $U_\chi(\fg)$-module has dimension divisible by $2^{\left\vert\Psi\right\vert}=2^{\left\vert\Phi^{+}\right\vert-1}=2^{23}$.

Now, set $\fr$ to be the $\bK$-subspace of $\fg$ generated by $e_\beta$ for all $\beta\in\Phi^{+}\setminus\{\alpha_3,\alpha_4\}$, by $h_1, h_2+h_3, h_3+h_4$, by $e_{\alpha_3}+e_{\alpha_4}$, and by $e_{-\alpha_2-\alpha_3-\alpha_4}+e_{-\alpha_2-2\alpha_3}, e_{-\alpha_3-\alpha_4}$ and $e_{-\alpha_3}+e_{-\alpha_4}$. This has dimension 29. One may check that it is in fact a subalgebra of $\fg$ (using that the characteristic of $\bK$ is 2). One may also check that $\chi([\fr,\fr])=0$, and that $\chi(\fr^{[p]})=0$ (since the characteristic is 2, we have $(x+y)^{[2]}=[x,y]$ whenever $x^{[2]}=y^{[2]}=0$).

Then $U_\chi(\fr)$ has dimension $2^{29}$ and has a 1-dimensional trivial module $\bK_\chi$. Therefore, $U_\chi(\fg)\otimes_{U_\chi(\fr)}\bK_\chi$ is a $U_\chi(\fg)$-module of dimension $2^{23}$, so the divisibility bound we found is strict.

\subsection{$F_4$ in characteristic 3}\label{F43}

Set $\Phi=F_4$ and $p=3$. Since $p$ is non-special in this case, we may apply Proposition~\ref{nonspec}. We must therefore give $c_\fg(\chi)$, and Sage computations show that $\fc_\fg(\chi)$ is the $\bK$-subspace of $\fg$ with the following basis:

\begin{enumerate}
	\item $e_{-\alpha_2-2\alpha_3-\alpha_4} + 2e_{-\alpha_1-\alpha_2-\alpha_3-\alpha_4} + e_{-\alpha_1-\alpha_2-2\alpha_3}$;
	\item $2e_{\alpha_1}+2e_{\alpha_2}+e_{\alpha_3} +e_{\alpha_4}$;
	\item $e_{\alpha_2+\alpha_3+\alpha_4}+2e_{\alpha_2+2\alpha_3} + 2e_{\alpha_1+\alpha_2+\alpha_3}$,
	\item $e_{\alpha_2+2\alpha_3+2\alpha_4} + e_{\alpha_1+2\alpha_2+2\alpha_3} + e_{\alpha_1+\alpha_2+2\alpha_3+\alpha_4}$;
	\item $e_{\alpha_1+2\alpha_2+3\alpha_3+\alpha_4}+2e_{\alpha_1+2\alpha_2+2\alpha_3+2\alpha_4}$;
	\item $e_{2\alpha_1+3\alpha_2+4\alpha_3+2\alpha_4}$.
	
\end{enumerate}

Therefore, $\dim c_{\fg}(\chi)=6$, and so $d(\chi)=23=\left\vert\Phi^{+}\right\vert-1$. Hence, every finite-dimensional $U_\chi(\fg)$-module has dimension divisible by $3^{23}$.

\subsection{$E_6$ in characteristic 2}\label{E62}
Suppose $\Phi=E_6$ and $p=2$. Since $p$ is non-special in this case, we may apply Proposition~\ref{nonspec}. We must therefore give $c_\fg(\chi)$, and Sage computations show that $\fc_\fg(\chi)$ is the $\bK$-subspace of $\fg$ with the following basis:

\begin{enumerate}
	\item $e_{-\alpha_2-\alpha_3-\alpha_4}+e_{-\alpha_2-\alpha_3-\alpha_6} + e_{-\alpha_3-\alpha_4-\alpha_6}$;
	\item $e_{\alpha_1}+e_{\alpha_2}+e_{\alpha_3} +e_{\alpha_4} + e_{\alpha_5} +e_{\alpha_6}$;
	\item $e_{\alpha_1+\alpha_2}+e_{\alpha_2+\alpha_3} + e_{\alpha_3+\alpha_4} + e_{\alpha_3+\alpha_6} + e_{\alpha_4+\alpha_5}$;
	\item $e_{\alpha_1+\alpha_2+\alpha_3+\alpha_4} + e_{\alpha_2+\alpha_3+\alpha_4+\alpha_5} + e_{\alpha_1+\alpha_2+\alpha_3+\alpha_6}+e_{\alpha_3+\alpha_4+\alpha_5+\alpha_6}$;
	\item $e_{\alpha_1+\alpha_2+\alpha_3+\alpha_4+\alpha_6}+e_{\alpha_2+2\alpha_3+\alpha_4+\alpha_6} + e_{\alpha_2+\alpha_3+\alpha_4+\alpha_5+\alpha_6}$;
	\item $e_{\alpha_1+2\alpha_2+2\alpha_3+\alpha_4+\alpha_6}+e_{\alpha_1+\alpha_2+2\alpha_3+\alpha_4+\alpha_5+\alpha_6} + e_{\alpha_2+2\alpha_3+2\alpha_4+\alpha_5+\alpha_6}$;
	\item $e_{\alpha_1+2\alpha_2+2\alpha_3+\alpha_4+\alpha_5+\alpha_6} + e_{\alpha_1+\alpha_2+2\alpha_3+2\alpha_4 +\alpha_5 +\alpha_6}$;
	\item $e_{\alpha_1+2\alpha_2+3\alpha_3+2\alpha_4+\alpha_5+2\alpha_6}$.
\end{enumerate}

In particular we see that $\dim c_{\fg}(\chi)=8$, and so $d(\chi)=35=\left\vert\Phi^{+}\right\vert-1$. Hence, every finite-dimensional $U_\chi(\fg)$-module has dimension divisible by $2^{35}$.

\subsection{$E_6$ in characteristic 3}\label{E63}

Suppose $\Phi=E_6$ and $p=3$. Since $p$ is non-special in this case, we may apply Proposition~\ref{nonspec}. We must therefore give $c_\fg(\chi)$, and Sage computations show that $\fc_\fg(\chi)$ is the $\bK$-subspace of $\fg$ with the following basis:
\begin{enumerate}
	\item $e_{-\alpha_2-\alpha_3-\alpha_4-\alpha_5}+2e_{-\alpha_3-\alpha_4-\alpha_5-\alpha_6} + e_{-\alpha_2-\alpha_3-\alpha_4-\alpha_6}  + 2e_{-\alpha_1-\alpha_2-\alpha_3-\alpha_4}+e_{-\alpha_1-\alpha_2-\alpha_3-\alpha_6}$;
	\item $2e_{-\alpha_1}+e_{-\alpha_2}+2e_{-\alpha_4}+e_{-\alpha_5}$;
	\item $h_1+2h_2+h_4+2h_5$;
	\item $e_{\alpha_1}+e_{\alpha_2}+e_{\alpha_3} +e_{\alpha_4} + e_{\alpha_5} +e_{\alpha_6}$;
	\item $e_{\alpha_1+\alpha_2+\alpha_3}+e_{\alpha_2+\alpha_3+\alpha_4} + e_{\alpha_3+\alpha_4+\alpha_5} + e_{\alpha_2+\alpha_3+\alpha_6} + e_{\alpha_3+\alpha_4+\alpha_6}$;
	\item $e_{\alpha_1+\alpha_2+\alpha_3+\alpha_4} + e_{\alpha_2+\alpha_3+\alpha_4+\alpha_5} + 2e_{\alpha_1+\alpha_2+\alpha_3+\alpha_6}+2e_{\alpha_3+\alpha_4+\alpha_5+\alpha_6}$;
	\item $2e_{\alpha_1+\alpha_2+\alpha_3+\alpha_4+\alpha_6}+2e_{\alpha_2+2\alpha_3+\alpha_4+\alpha_6} + e_{\alpha_2+\alpha_3+\alpha_4+\alpha_5+\alpha_6}+2e_{\alpha_1+\alpha_2+\alpha_3+\alpha_4+\alpha_5}$;
	\item $2e_{\alpha_1+2\alpha_2+2\alpha_3+\alpha_4+\alpha_6}+e_{\alpha_1+\alpha_2+2\alpha_3+\alpha_4+\alpha_5+\alpha_6} + e_{\alpha_2+2\alpha_3+2\alpha_4+\alpha_5+\alpha_6}$;
	\item $e_{\alpha_1+2\alpha_2+2\alpha_3+\alpha_4+\alpha_5+\alpha_6} + 2e_{\alpha_1+\alpha_2+2\alpha_3+2\alpha_4 +\alpha_5 +\alpha_6}$;
	\item $e_{\alpha_1+2\alpha_2+3\alpha_3+2\alpha_4+\alpha_5+2\alpha_6}$.
\end{enumerate}

Hence, $\dim(c_\fg(\chi))=10$ and so $d(\chi)=34=\left\vert\Phi^{+}\right\vert-2$. Proposition~\ref{nonspec} then says that all $U_\chi(\fg)$-modules have dimension divisible by $3^{34}$.

Now, set $\fr$ be the $\bK$-subspace of $\fg$ generated by $e_\beta$ for all $\beta\in\Phi^{+}$, by $h_1, h_2, h_4, h_5$ and $h_6$, and by $2e_{-\alpha_1}+e_{-\alpha_2}$ and $2e_{-\alpha_4}+e_{-\alpha_5}$. This has dimension 43. One may check that it is in fact a subalgebra of $\fg$ (using that the characteristic is 3). One may also check that $\chi([\fr,\fr])=0$ and that $\chi(\fr^{[p]})=0$.

Then $U_\chi(\fr)$ has dimension $3^{43}$ and has a 1-dimensional module $\bK_\chi$. Therefore $U_\chi(\fg)\otimes_{U_\chi(\fr)}\bK_\chi$ is a $U_\chi(\fg)$-module of dimension $3^{35}$. In particular, this shows that it is {\em not} true in this case that all baby Verma modules are irreducible $U_\chi(\fg)$-modules. It obviously, however, does not imply that our divisibility bound is strict.

\subsection{$E_7$ in characteristic 2}\label{E72}

Suppose $\Phi=E_7$ and $p=2$. Since $p$ is non-special in this case, we may apply Proposition~\ref{nonspec}. We must therefore give $c_\fg(\chi)$, and Sage computations show that $\fc_\fg(\chi)$ is the $\bK$-subspace of $\fg$ with the following basis:
\begin{enumerate}
	\item $e_{-\alpha_1-2\alpha_2-2\alpha_3-2\alpha_4-\alpha_5-\alpha_7} + e_{-\alpha_2-2\alpha_3-2\alpha_4-2\alpha_5-\alpha_6-\alpha_7} + e_{-\alpha_1-\alpha_2-2\alpha_3-2\alpha_4-\alpha_5-\alpha_6-\alpha_7} \\ + e_{-\alpha_1-\alpha_2-\alpha_3-2\alpha_4-2\alpha_5-\alpha_6-\alpha_7}$;
	\item $e_{-\alpha_1-\alpha_2-\alpha_3-\alpha_4-\alpha_5}+e_{-\alpha_3-2\alpha_4-\alpha_5-\alpha_7} + e_{-\alpha_1-\alpha_2-\alpha_3-\alpha_4-\alpha_7} + e_{-\alpha_2-\alpha_3-\alpha_4-\alpha_5-\alpha_7};$
	\item $e_{-\alpha_3-\alpha_4-\alpha_5} + e_{-\alpha_4-\alpha_5-\alpha_7}+e_{-\alpha_3-\alpha_4-\alpha_7}$;
	\item $e_{-\alpha_1}+e_{-\alpha_3}+e_{-\alpha_7}$;
	\item $h_1+h_3+h_7$;
	\item $e_{\alpha_1}+e_{\alpha_2}+e_{\alpha_3} +e_{\alpha_4} + e_{\alpha_5} +e_{\alpha_6}+e_{\alpha_7}$;
	\item $e_{\alpha_1+\alpha_2}+e_{\alpha_2+\alpha_3} + e_{\alpha_3+\alpha_4} + e_{\alpha_4+\alpha_5} + e_{\alpha_4+\alpha_7}+e_{\alpha_5+\alpha_6}$;
	\item $e_{\alpha_1+\alpha_2+\alpha_3+\alpha_4} + e_{\alpha_2+\alpha_3+\alpha_4+\alpha_5} + e_{\alpha_2+\alpha_3+\alpha_4+\alpha_7} 
	+e_{\alpha_4+\alpha_5+\alpha_6+\alpha_7}+e_{\alpha_3+\alpha_4+\alpha_5+\alpha_6}$;
	\item $e_{\alpha_1+\alpha_2+\alpha_3+\alpha_4+\alpha_7}+e_{\alpha_2+\alpha_3+\alpha_4+\alpha_5+\alpha_7} + e_{\alpha_3+\alpha_4+\alpha_5+\alpha_6+\alpha_7}+e_{\alpha_3+2\alpha_4+\alpha_5+\alpha_7}$;
	\item $e_{\alpha_1+\alpha_2+\alpha_3+2\alpha_4+\alpha_5+\alpha_7}+e_{\alpha_2+2\alpha_3+2\alpha_4+\alpha_5+\alpha_7} + e_{\alpha_2+\alpha_3+2\alpha_4+\alpha_5+\alpha_6+\alpha_7} + e_{\alpha_3+2\alpha_4+2\alpha_5+\alpha_6+\alpha_7}$;
	\item $e_{\alpha_1+\alpha_2+\alpha_3+2\alpha_4+\alpha_5+\alpha_6+\alpha_7} + e_{\alpha_2+2\alpha_3+2\alpha_4 +\alpha_5 +\alpha_6+\alpha_7} +e_{\alpha_2+\alpha_3+2\alpha_4+2\alpha_5+\alpha_6+\alpha_7}$;
	\item $e_{\alpha_1+2\alpha_2+2\alpha_3+2\alpha_4+\alpha_5+\alpha_7}+e_{\alpha_1+\alpha_2+2\alpha_3+2\alpha_4+\alpha_5+\alpha_6+\alpha_7} + e_{\alpha_1+\alpha_2+\alpha_3+2\alpha_4+2\alpha_5+\alpha_6+\alpha_7}$;
	\item $e_{\alpha_1+2\alpha_2+2\alpha_3+2\alpha_4+2\alpha_5+\alpha_6+\alpha_7}+e_{\alpha_1+\alpha_2+2\alpha_3+3\alpha_4+2\alpha_5+\alpha_6+\alpha_7}  + e_{\alpha_2+2\alpha_3+3\alpha_4+2\alpha_5+\alpha_6+2\alpha_7}$;
	\item $e_{\alpha_1+2\alpha_2+3\alpha_3+3\alpha_3+2\alpha_5+\alpha_6+\alpha_7} + e_{\alpha_1+2\alpha_2+2\alpha_3+3\alpha_4+2\alpha_5+\alpha_6+2\alpha_7}$;
	\item $e_{\alpha_1+2\alpha_2+3\alpha_3+4\alpha_4+3\alpha_5+2\alpha_6+2\alpha_7}$.
\end{enumerate}
We conclude that $\dim(\fc_\fg(\chi))=15$ and so $d(\chi)=59=\left\vert\Phi^{+}\right\vert-4$. We then conclude from Proposition~\ref{nonspec} that every finite-dimensional $U_\chi(\fg)$-module has dimension divisible by $2^{59}$.

\subsection{$E_7$ in characteristic 3}\label{E73}

Suppose $\Phi=E_7$ and $p=3$. Since $p$ is non-special in this case, we may apply Proposition~\ref{nonspec}. We must therefore give $c_\fg(\chi)$, and Sage computations show that $\fc_\fg(\chi)$ is the $\bK$-subspace of $\fg$ with the following basis:

\begin{enumerate}
	\item $e_{-\alpha_2-\alpha_3-\alpha_4-\alpha_5}+2e_{-\alpha_2-\alpha_3-\alpha_4-\alpha_7} + 2e_{-\alpha_3-\alpha_4-\alpha_5-\alpha_6}
	+e_{-\alpha_3-\alpha_4-\alpha_5-\alpha_7}+e_{-\alpha_4-\alpha_5-\alpha_6-\alpha_7}$;
	\item $e_{\alpha_1}+e_{\alpha_2}+e_{\alpha_3} +e_{\alpha_4} + e_{\alpha_5} +e_{\alpha_6}+e_{\alpha_7}$;
	\item $e_{\alpha_1+\alpha_2+\alpha_3}+e_{\alpha_2+\alpha_3+\alpha_4} + e_{\alpha_3+\alpha_4+\alpha_5} + e_{\alpha_3+\alpha_4+\alpha_7} + e_{\alpha_4+\alpha_5+\alpha_6}+e_{\alpha_4+\alpha_5+\alpha_7}$;
	\item $e_{\alpha_1+\alpha_2+\alpha_3+\alpha_4+\alpha_5} + e_{\alpha_2+\alpha_3+\alpha_4+\alpha_5+\alpha_6} + 2e_{\alpha_2+\alpha_3+\alpha_4+\alpha_5+\alpha_7}
	+e_{\alpha_3+2\alpha_4+\alpha_5+\alpha_7}+e_{\alpha_3+\alpha_4+\alpha_5+\alpha_6+\alpha_7}$;
	\item $2e_{\alpha_1+\alpha_2+\alpha_3+2\alpha_4+\alpha_5+\alpha_7}+e_{\alpha_1+\alpha_2+\alpha_3+\alpha_4+\alpha_5+\alpha_6+\alpha_7}  + e_{\alpha_2+2\alpha_3+2\alpha_4+\alpha_5+\alpha_7}+e_{\alpha_2+\alpha_3+2\alpha_4+\alpha_5+\alpha_6+\alpha_7}\\+2e_{\alpha_3+2\alpha_4+2\alpha_5+\alpha_6+\alpha_7}$;
	\item $e_{\alpha_1+2\alpha_2+2\alpha_3+2\alpha_4+\alpha_5+\alpha_7}+e_{\alpha_1+\alpha_2+2\alpha_3+2\alpha_4+\alpha_5+\alpha_6+\alpha_7} + 2e_{\alpha_1+\alpha_2+\alpha_3+2\alpha_4+2\alpha_5+\alpha_6+\alpha_7}$;
	\item $e_{\alpha_1+2\alpha_2+2\alpha_3+2\alpha_4+2\alpha_5+\alpha_6+\alpha_7} + 2e_{\alpha_1+\alpha_2+2\alpha_3+3\alpha_4 +2\alpha_5 +\alpha_6+\alpha_7} +e_{\alpha_2+2\alpha_3+3\alpha_4+2\alpha_5+\alpha_6+2\alpha_7}$;
	\item $e_{\alpha_1+2\alpha_2+3\alpha_3+3\alpha_4+2\alpha_5+\alpha_6+\alpha_7}+2e_{\alpha_1+2\alpha_2+2\alpha_3+3\alpha_4+2\alpha_5+\alpha_6+2\alpha_7}$;
	\item $e_{\alpha_1+2\alpha_2+3\alpha_3+4\alpha_4+3\alpha_5+2\alpha_6+2\alpha_7}$.
\end{enumerate}

In particular we see that $\dim c_{\fg}(\chi)=9$, and so $d(\chi)=62=\left\vert\Phi^{+}\right\vert-1$. Every finite-dimensional $U_\chi(\fg)$-module therefore has dimension divisible by $3^{62}$.

\subsection{$E_8$ in characteristic 2}\label{E82}

Suppose $\Phi=E_8$ and $p=2$. Since $p$ is non-special in this case, we may apply Proposition~\ref{nonspec}. We must therefore give $c_\fg(\chi)$, and Sage computations show that $\fc_\fg(\chi)$ is the $\bK$-subspace of $\fg$ with the following basis:
\begin{enumerate}
	\item $e_{-\alpha_2-2\alpha_3-3\alpha_4-4\alpha_5 - 2\alpha_6-\alpha_7-2\alpha_8}+e_{-\alpha_1-\alpha_2-2\alpha_3-3\alpha_4 -3\alpha_5-2\alpha_6-\alpha_7-2\alpha_8} + e_{-\alpha_1-2\alpha_2-2\alpha_3-3\alpha_4-3\alpha_5-2\alpha_6-\alpha_7-\alpha_8} \\ +e_{-\alpha_1-2\alpha_2-2\alpha_3-2\alpha_4-3\alpha_5-2\alpha_6-\alpha_7-2\alpha_8}$;
	\item $e_{-\alpha_2-2\alpha_3-2\alpha_4-2\alpha_5-\alpha_6-\alpha_8} + e_{-\alpha_2-\alpha_3-\alpha_4-2\alpha_5-2\alpha_6-\alpha_7-\alpha_8} + e_{-\alpha_2-\alpha_3-2\alpha_4-2\alpha_5-\alpha_6-\alpha_7-\alpha_8}
	\\ + e_{-\alpha_3-2\alpha_4-2\alpha_5-2\alpha_6-\alpha_7-\alpha_8}$;
	\item $e_{-\alpha_2-\alpha_3-\alpha_4-\alpha_5-\alpha_6}+e_{-\alpha_2-\alpha_3-\alpha_4-\alpha_5-\alpha_8} + e_{-\alpha_4-2\alpha_5-\alpha_6-\alpha_8} + e_{-\alpha_3-\alpha_4-\alpha_5-\alpha_6-\alpha_8}$;
	\item $e_{-\alpha_4-\alpha_5-\alpha_6} + e_{-\alpha_4-\alpha_5-\alpha_8} + e_{-\alpha_5-\alpha_6-\alpha_8}$;
	\item $e_{\alpha_1}+e_{\alpha_2}+e_{\alpha_3} +e_{\alpha_4} + e_{\alpha_5} +e_{\alpha_6}+e_{\alpha_7}+e_{\alpha_8}$;
	\item $e_{\alpha_1+\alpha_2}+e_{\alpha_2+\alpha_3} + e_{\alpha_3+\alpha_4} + e_{\alpha_4+\alpha_5} + e_{\alpha_5+\alpha_6}+e_{\alpha_5+\alpha_8} + e_{\alpha_6+\alpha_7} $
	\item $e_{\alpha_1+\alpha_2+\alpha_3+\alpha_4} + e_{\alpha_2+\alpha_3+\alpha_4+\alpha_5} + e_{\alpha_3+\alpha_4+\alpha_5+\alpha_6} +e_{\alpha_3+\alpha_4+\alpha_5+\alpha_8}+e_{\alpha_5+\alpha_6+\alpha_7+\alpha_8} + e_{\alpha_4+\alpha_5+\alpha_6+\alpha_7}$;
	\item $e_{\alpha_1+\alpha_2+\alpha_3+\alpha_4+\alpha_5+\alpha_6+\alpha_8}+e_{\alpha_2+\alpha_3+\alpha_4+2\alpha_5+\alpha_6+\alpha_8} +e_{\alpha_3+2\alpha_4+2\alpha_5+\alpha_6+\alpha_8}+e_{\alpha_1+\alpha_2+\alpha_3+\alpha_4+\alpha_5+\alpha_6+\alpha_7} \\ +e_{\alpha_4+2\alpha_5+2\alpha_6+\alpha_7+\alpha_8} + e_{\alpha_3+\alpha_4+2\alpha_5+\alpha_6+\alpha_7+\alpha_8}$;
	\item $e_{\alpha_1+\alpha_2+\alpha_3+\alpha_4+\alpha_5+\alpha_6+\alpha_7+\alpha_8}+e_{\alpha_2+\alpha_3+\alpha_4+2\alpha_5+\alpha_6+\alpha_7+\alpha_8}
	 + e_{\alpha_3+2\alpha_4+2\alpha_5+\alpha_6+\alpha_7+\alpha_8}\\+e_{\alpha_3+\alpha_4+2\alpha_5+2\alpha_6+\alpha_7+\alpha_8}$;
	\item $e_{\alpha_3+2\alpha_4+3\alpha_5+2\alpha_6+\alpha_7+2\alpha_8} + e_{\alpha_2+2\alpha_3+2\alpha_4 +2\alpha_5 +2\alpha_6+\alpha_7+\alpha_8}+e_{\alpha_2+\alpha_3+2\alpha_4+3\alpha_5+2\alpha_6+\alpha_7+\alpha_8}\\+e_{\alpha_1+\alpha_2+\alpha_3+2\alpha_4+2\alpha_5+2\alpha_6+\alpha_7+\alpha_8}$;
	\item $e_{\alpha_2+2\alpha_3+3\alpha_4+3\alpha_5+2\alpha_6+\alpha_7+\alpha_8}+e_{\alpha_2+2\alpha_3+2\alpha_4+3\alpha_5+2\alpha_6+\alpha_7+2\alpha_8}+e_{\alpha_1+2\alpha_2+2\alpha_3+2\alpha_4+2\alpha_5+2\alpha_6+\alpha_7+\alpha_8}\\+e_{\alpha_1+\alpha_2+2\alpha_3+2\alpha_4+3\alpha_5+2\alpha_6+\alpha_7+\alpha_8}$;
	\item $e_{\alpha_1+\alpha_2+2\alpha_3+3\alpha_4+3\alpha_5+2\alpha_6+\alpha_7+\alpha_8}+e_{\alpha_1+\alpha_2+2\alpha_3+2\alpha_4+3\alpha_5+2\alpha_6+\alpha_7+2\alpha_8}
	 + e_{\alpha_1+2\alpha_2+2\alpha_3+2\alpha_4+3\alpha_5+2\alpha_6+\alpha_7+\alpha_8}$;
	\item $e_{\alpha_2+2\alpha_3+3\alpha_4+4\alpha_5+3\alpha_6+2\alpha_7+2\alpha_8} + e_{\alpha_1+\alpha_2+2\alpha_3+3\alpha_4+4\alpha_5+3\alpha_6+\alpha_7+2\alpha_8}
	+ e_{\alpha_1+2\alpha_2+3\alpha_3+3\alpha_4+3\alpha_5+2\alpha_6+\alpha_7+2\alpha_8}
	\\+ e_{\alpha_1+2\alpha_2+2\alpha_3+3\alpha_4+4\alpha_5+2\alpha_6+\alpha_7+2\alpha_8}$;
	\item $e_{\alpha_1+2\alpha_2+3\alpha_3+4\alpha_4+4\alpha_5+2\alpha_6+\alpha_7+2\alpha_8}
	+ e_{\alpha_1+2\alpha_2+2\alpha_3+3\alpha_4+4\alpha_5+3\alpha_6+2\alpha_7+2\alpha_8}
	+ e_{\alpha_1+2\alpha_2+3\alpha_3+3\alpha_4+4\alpha_5+3\alpha_6+\alpha_7+2\alpha_8}$;
	\item $e_{\alpha_1+2\alpha_2+3\alpha_3+4\alpha_4+5\alpha_5+4\alpha_6+2\alpha_7+2\alpha_8}
	+ e_{\alpha_1+2\alpha_2+3\alpha_3+4\alpha_4+5\alpha_5+3\alpha_6+2\alpha_7+3\alpha_8}$;
	\item $e_{2\alpha_1+3\alpha_2+4\alpha_3+5\alpha_4+6\alpha_5+4\alpha_6+2\alpha_7+3\alpha_8}$.	
	
\end{enumerate}
In particular, $\dim(\fc_\fg(\chi))=16$ and so $d(\chi)=116=\left\vert \Phi^{+}\right\vert -4$. Proposition~\ref{nonspec} then says that all finite-dimensional $U_\chi(\fg)$-modules have dimension divisible by $2^{116}$.

\subsection{$E_8$ in characteristic 3}\label{E83}

Suppose $\Phi=E_8$ and $p=3$. Since $p$ is non-special in this case, we may apply Proposition~\ref{nonspec}. We must therefore give $c_\fg(\chi)$, and Sage computations show that $\fc_\fg(\chi)$ is the $\bK$-subspace of $\fg$ with the following basis:

\begin{enumerate}
	\item $e_{-\alpha_1-\alpha_2-2\alpha_3-2\alpha_4 - 2\alpha_5-\alpha_6-\alpha_8}+e_{-\alpha_1-\alpha_2-\alpha_3-2\alpha_4 -2\alpha_5-\alpha_6-\alpha_7-\alpha_8} + e_{-\alpha_2-2\alpha_3-2\alpha_4-2\alpha_5-\alpha_6-\alpha_7-\alpha_8}\\+2e_{-\alpha_1-\alpha_2-\alpha_3-\alpha_4-2\alpha_5-2\alpha_6-\alpha_7-\alpha_8}+2e_{-\alpha_3-2\alpha_4-3\alpha_5-2\alpha_6-\alpha_7-\alpha_8}+e_{-\alpha_2-\alpha_3-2\alpha_4-2\alpha_5-2\alpha_6-\alpha_7-\alpha_8}$;
	\item $e_{-\alpha_3-\alpha_4-\alpha_5-\alpha_6} + 2e_{-\alpha_3-\alpha_4-\alpha_5-\alpha_8} + e_{-\alpha_4-\alpha_5-\alpha_6-\alpha_8}
	 + e_{-\alpha_5-\alpha_6-\alpha_7-\alpha_8} + 2e_{-\alpha_4-\alpha_5-\alpha_6-\alpha_7}$;
	\item $e_{\alpha_1}+e_{\alpha_2}+e_{\alpha_3} +e_{\alpha_4} + e_{\alpha_5} +e_{\alpha_6}+e_{\alpha_7}+e_{\alpha_8}$;
	\item $e_{\alpha_1+\alpha_2+\alpha_3}+e_{\alpha_2+\alpha_3+\alpha_4} + e_{\alpha_3+\alpha_4+\alpha_5} + e_{\alpha_4+\alpha_5+\alpha_6} + e_{\alpha_4+\alpha_5+\alpha_8}+e_{\alpha_5+\alpha_6+\alpha_8} + e_{\alpha_5+\alpha_6+\alpha_7}$;
	\item $e_{\alpha_1+\alpha_2+\alpha_3+\alpha_4+\alpha_5+\alpha_6+\alpha_8} + 2e_{\alpha_2+\alpha_3+\alpha_4+2\alpha_5+\alpha_6+\alpha_8} + e_{\alpha_3+2\alpha_4+2\alpha_5+\alpha_6+\alpha_8}\\+2e_{\alpha_4+2\alpha_5+2\alpha_6+\alpha_7+\alpha_8}+e_{\alpha_2+\alpha_3+\alpha_4+\alpha_5+\alpha_6+\alpha_7+\alpha_8} + e_{\alpha_3+\alpha_4+2\alpha_5+\alpha_6+\alpha_7+\alpha_8}$;
	\item $e_{\alpha_1+\alpha_2+\alpha_3+\alpha_4+2\alpha_5+\alpha_6+\alpha_7+\alpha_8}+2e_{\alpha_1+\alpha_2+\alpha_3+2\alpha_4+2\alpha_5+\alpha_6+\alpha_8} +2e_{\alpha_2+\alpha_3+\alpha_4+2\alpha_5+2\alpha_6+\alpha_7+\alpha_8}\\+e_{\alpha_2+\alpha_3+2\alpha_4+2\alpha_5+\alpha_6+\alpha_7+\alpha_8}+e_{\alpha_2+2\alpha_3+2\alpha_4+2\alpha_5+\alpha_6+\alpha_8}$;
	\item $e_{\alpha_1+2\alpha_2+2\alpha_3+2\alpha_4+2\alpha_5+\alpha_6+\alpha_8}+e_{\alpha_1+\alpha_2+2\alpha_3+2\alpha_4+2\alpha_5+\alpha_6+\alpha_7+\alpha_8}
	 + e_{\alpha_3+2\alpha_4+3\alpha_5+2\alpha_6+\alpha_7+2\alpha_8}\\+e_{\alpha_2+2\alpha_3+2\alpha_4+2\alpha_5+2\alpha_6+\alpha_7+\alpha_8}+2e_{\alpha_2+\alpha_3+2\alpha_4+3\alpha_5+2\alpha_6+\alpha_7+\alpha_8} + e_{\alpha_1+\alpha_2+\alpha_3+2\alpha_4+2\alpha_5+2\alpha_6+\alpha_7+\alpha_8}$;
	\item $e_{\alpha_2+2\alpha_3+3\alpha_4+3\alpha_5+2\alpha_6+\alpha_7+\alpha_8} + 2e_{\alpha_2+2\alpha_3+2\alpha_4 +3\alpha_5 +2\alpha_6+\alpha_7+2\alpha_8}+e_{\alpha_1+2\alpha_2+2\alpha_3+2\alpha_4+2\alpha_5+2\alpha_6+\alpha_7+\alpha_8}\\+2e_{\alpha_1+\alpha_2+\alpha_3+2\alpha_4+3\alpha_5+2\alpha_6+\alpha_7+2\alpha_8}
	 + 2e_{\alpha_1+\alpha_2+2\alpha_3+2\alpha_4+3\alpha_5+2\alpha_6+\alpha_7+\alpha_8}$;
	\item $e_{\alpha_2+2\alpha_3+3\alpha_4+4\alpha_5+3\alpha_6+2\alpha_7+2\alpha_8}+2e_{\alpha_1+\alpha_2+2\alpha_3+3\alpha_4+4\alpha_5+3\alpha_6+\alpha_7+2\alpha_8}+2e_{\alpha_1+2\alpha_2+3\alpha_3+3\alpha_4+3\alpha_5+2\alpha_6+\alpha_7+2\alpha_8}\\+e_{\alpha_1+2\alpha_2+2\alpha_3+3\alpha_4+4\alpha_5+2\alpha_6+\alpha_7+2\alpha_8}$;
	\item $e_{\alpha_1+2\alpha_2+3\alpha_3+4\alpha_4+4\alpha_5+2\alpha_6+\alpha_7+2\alpha_8}+e_{\alpha_1+2\alpha_2+2\alpha_3+3\alpha_4+4\alpha_5+3\alpha_6+2\alpha_7+2\alpha_8}
	 \\+ 2e_{\alpha_1+2\alpha_2+3\alpha_3+3\alpha_4+4\alpha_5+3\alpha_6+\alpha_7+2\alpha_8}$
	\item $e_{\alpha_1+2\alpha_2+3\alpha_3+4\alpha_4+5\alpha_5+4\alpha_6+2\alpha_7+2\alpha_8} + 2e_{\alpha_1+2\alpha_2+3\alpha_3+4\alpha_4+5\alpha_5+3\alpha_6+2\alpha_7+3\alpha_8}$;
	\item $e_{2\alpha_1+3\alpha_2+4\alpha_3+5\alpha_4+6\alpha_5+4\alpha_6+2\alpha_7+3\alpha_8}$.
\end{enumerate}

Therefore $\dim(\fc_\fg(\chi))=12$ and so $d(\chi)=118=\left\vert \Phi^{+}\right\vert -2$. Proposition~\ref{nonspec} then says that each $U_\chi(\fg)$-module has dimension divisible by $3^{118}$.

\subsection{$E_8$ in characteristic 5}\label{E85}

Suppose $\Phi=E_8$ and $p=5$. Since $p$ is non-special in this case, we may apply Proposition~\ref{nonspec}. We must therefore give $c_\fg(\chi)$, and Sage computations show that $\fc_\fg(\chi)$ is the $\bK$-subspace of $\fg$ with the following basis:
\begin{enumerate}
	\item $e_{-\alpha_1-\alpha_2-\alpha_3-\alpha_4 - \alpha_5-\alpha_6}+4e_{-\alpha_1-\alpha_2-\alpha_3-\alpha_4 -\alpha_5-\alpha_8}  + e_{-\alpha_2-\alpha_3-\alpha_4-\alpha_5-\alpha_6-\alpha_8}+2e_{-\alpha_3-\alpha_4-2\alpha_5-\alpha_6-\alpha_8}\\+2e_{-\alpha_2-\alpha_3-\alpha_4-\alpha_5-\alpha_6-\alpha_7}+2e_{-\alpha_4-2\alpha_5-\alpha_6-\alpha_7-\alpha_8}  + 3e_{-\alpha_3-\alpha_4-\alpha_5-\alpha_6-\alpha_7-\alpha_8}$;
	\item $e_{\alpha_1}+e_{\alpha_2}+e_{\alpha_3} +e_{\alpha_4} + e_{\alpha_5} +e_{\alpha_6}+e_{\alpha_7}+e_{\alpha_8}$;
	\item $e_{\alpha_1+\alpha_2+\alpha_3+\alpha_4+\alpha_5}+e_{\alpha_2+\alpha_3+\alpha_4+\alpha_5+\alpha_6} + e_{\alpha_2+\alpha_3+\alpha_4+\alpha_5+\alpha_8} + 2e_{\alpha_4+2\alpha_5+\alpha_6+\alpha_8} + 3e_{\alpha_3+\alpha_4+\alpha_5+\alpha_6+\alpha_8}\\ +e_{\alpha_3+\alpha_4+\alpha_5+\alpha_6+\alpha_7}+2e_{\alpha_4+\alpha_5+\alpha_6+\alpha_7+\alpha_8}$;
	\item $e_{\alpha_1+\alpha_2+\alpha_3+\alpha_4+\alpha_5+\alpha_6+\alpha_8} + 4e_{\alpha_2+\alpha_3+\alpha_4+2\alpha_5+\alpha_6+\alpha_8} + e_{\alpha_3+2\alpha_4+2\alpha_5+\alpha_6+\alpha_8}+3e_{\alpha_1+\alpha_2+\alpha_3+\alpha_4+\alpha_5+\alpha_6+\alpha_7}\\+4e_{\alpha_4+2\alpha_5+2\alpha_6+\alpha_7+\alpha_8}+3e_{\alpha_2+\alpha_3+\alpha_4+\alpha_5+\alpha_6+\alpha_7+\alpha_8} + e_{\alpha_3+\alpha_4+2\alpha_5+\alpha_6+\alpha_7+\alpha_8}$;
	\item $e_{\alpha_1+2\alpha_2+2\alpha_3+2\alpha_4+2\alpha_5+\alpha_6+\alpha_8}+e_{\alpha_1+\alpha_2+2\alpha_3+2\alpha_4+2\alpha_5+\alpha_6+\alpha_7+\alpha_8} + 2e_{\alpha_3+2\alpha_4+3\alpha_5+2\alpha_6+\alpha_7+2\alpha_8}\\+2e_{\alpha_3+2\alpha_4+3\alpha_5+2\alpha_6+\alpha_7+2\alpha_8}+3e_{\alpha_2+\alpha_3+2\alpha_4+3\alpha_5+2\alpha_6+\alpha_7+\alpha_8}+2e_{\alpha_1+\alpha_2+\alpha_3+2\alpha_4+2\alpha_5+2\alpha_6+\alpha_7+\alpha_8}$;
	\item $e_{\alpha_2+2\alpha_3+3\alpha_4+3\alpha_5+2\alpha_6+\alpha_7+\alpha_8}+4e_{\alpha_2+2\alpha_3+2\alpha_4+3\alpha_5+2\alpha_6+\alpha_7+2\alpha_8} + e_{\alpha_1+2\alpha_2+2\alpha_3+2\alpha_4+2\alpha_5+2\alpha_6+\alpha_7+\alpha_8}\\+2e_{\alpha_1+\alpha_2+\alpha_3+2\alpha_4+3\alpha_5+2\alpha_6+\alpha_7+2\alpha_8}+4e_{\alpha_1+\alpha_2+2\alpha_3+2\alpha_4+3\alpha_5+2\alpha_6+\alpha_7+\alpha_8}$;
	\item $e_{\alpha_2+2\alpha_3+3\alpha_4+4\alpha_5+3\alpha_6+2\alpha_7+2\alpha_8} + 4e_{\alpha_1+\alpha_2+2\alpha_3+3\alpha_4 +4\alpha_5 +3\alpha_6+\alpha_7+2\alpha_8}+4e_{\alpha_1+2\alpha_2+3\alpha_3+3\alpha_4+3\alpha_5+2\alpha_6+\alpha_7+2\alpha_8}\\+e_{\alpha_1+2\alpha_2+2\alpha_3+3\alpha_4+4\alpha_5+2\alpha_6+\alpha_7+2\alpha_8}$;
	\item $e_{\alpha_1+2\alpha_2+3\alpha_3+4\alpha_4+4\alpha_5+2\alpha_6+\alpha_7+2\alpha_8}+e_{\alpha_1+2\alpha_2+2\alpha_3+3\alpha_4+4\alpha_5+3\alpha_6+2\alpha_7+2\alpha_8}\\+4e_{\alpha_1+2\alpha_2+3\alpha_3+3\alpha_4+4\alpha_5+3\alpha_6+\alpha_7+2\alpha_8}$;
	\item $e_{\alpha_1+2\alpha_2+3\alpha_3+4\alpha_4+5\alpha_5+4\alpha_6+2\alpha_7+2\alpha_8}+4e_{\alpha_1+2\alpha_2+3\alpha_3+4\alpha_4+5\alpha_5+3\alpha_6+2\alpha_7+3\alpha_8}$;
	\item $e_{2\alpha_1+3\alpha_2+4\alpha_3+5\alpha_4+6\alpha_5+4\alpha_6+2\alpha_7+3\alpha_8}$.
\end{enumerate}

In particular we see that $\dim c_{\fg}(\chi)=10$, and so $d(\chi)=119=\left\vert\Phi^{+}\right\vert-1$. Hence, every finite-dimensional $U_\chi(\fg)$-module has dimension divisible by $5^{119}$.

\end{document}